\documentclass[12pt]{amsart}
\usepackage{amsthm,amssymb,amscd}
\usepackage{mathtools}
\usepackage{stmaryrd}
\usepackage[all]{xy}
\usepackage[numbers,sort&compress]{natbib}
\usepackage{bm}


\usepackage[%
dvipdfm,%
colorlinks=true,%
bookmarks=true,%
bookmarksnumbered=true,%
pdftitle={Moduli spaces of meromorphic connections and quiver varieties},%
pdfauthor={Kazuki Hiroe and Daisuke Yamakawa}%
]{hyperref}



\setlength{\footskip}{2em}
\setlength{\oddsidemargin}{2.5pc}
\setlength{\evensidemargin}{2.5pc}
\setlength{\textwidth}{34pc}
\setlength{\topmargin}{-5mm}
\setlength{\textheight}{45\baselineskip}
\addtolength{\textheight}{\topskip}


\newtheorem{theorem}{Theorem}[section]
\newtheorem{corollary}[theorem]{Corollary}
\newtheorem{proposition}[theorem]{Proposition}
\newtheorem{lemma}[theorem]{Lemma}

\theoremstyle{definition}    
\newtheorem{definition}[theorem]{Definition}
\newtheorem{example}[theorem]{Example}
\newtheorem{remark}[theorem]{Remark}

\theoremstyle{remark}

\newenvironment{thm}{\begin{theorem}}{\end{theorem}}
\newenvironment{prp}{\begin{proposition}}{\end{proposition}}
\newenvironment{crl}{\begin{corollary}}{\end{corollary}}
\newenvironment{dfn}{\begin{definition}}{\end{definition}}
\newenvironment{lmm}{\begin{lemma}}{\end{lemma}}
\newenvironment{rem}{\begin{remark}}{\end{remark}}

\newcommand{\thmref}[1]{Theorem~\ref{#1}}
\newcommand{\secref}[1]{Section~\ref{#1}}
\newcommand{\lmmref}[1]{Lemma~\ref{#1}}
\newcommand{\prpref}[1]{Proposition~\ref{#1}}
\newcommand{\crlref}[1]{Corollary~\ref{#1}}
\newcommand{\dfnref}[1]{Definition~\ref{#1}}
\newcommand{\remref}[1]{Remark~\ref{#1}}



\newcommand{\Z}{\mathbb{Z}} 
\newcommand{\C}{\mathbb{C}} 
\newcommand{\bbP}{\mathbb{P}} 

\newcommand{\GL}{\operatorname{GL}} 
\newcommand{\gl}{\operatorname{\mathfrak{gl}}}
\newcommand{\frg}{\mathfrak{g}} 
\newcommand{\frh}{\mathfrak{h}} 
\newcommand{\frt}{\mathfrak{t}} 
\newcommand{\Lie}{\operatorname{Lie}} 
\newcommand{\Ad}{\operatorname{Ad}} 
\newcommand{\ad}{\operatorname{ad}}

\newcommand{\End}{\operatorname{End}}

\newcommand{\Hom}{\operatorname{Hom}}
\newcommand{\Ker}{\operatorname{Ker}}
\newcommand{\range}{\operatorname{Im}}

\newcommand{\tr}{\operatorname{tr}}

\newcommand{\prj}{\operatorname{pr}}

\newcommand{\Mat}{\operatorname{Mat}}

\newcommand{\GIT}{/\hspace{-3pt}/} 
\newcommand{\calO}{\mathcal{O}} 
\newcommand{\hcalO}{\widehat{\mathcal{O}}} 

\newcommand{\fp}[1]{\ensuremath{[\![#1]\!]} }
\newcommand{\cp}[1]{\ensuremath{\{#1\}} }
\newcommand{\fl}[1]{\ensuremath{(\!(#1)\!)} }
\newcommand{\res}{\operatorname*{res}}



\newcommand{\set}[2]%
{\left\{ \, #1 \, \middle| \, #2 \, \right\} }
\newcommand{\ov}{\overline}
\newcommand{\wh}{\widehat}
\newcommand{\wt}{\widetilde}
\newcommand{\sumfrac}[2]{\genfrac{}{}{0pt}{2}{#1}{#2}}

\newcommand{\sfQ}{\mathsf{Q}} 
\newcommand{\sft}{\mathsf{t}} 
\newcommand{\sfs}{\mathsf{s}} 
\newcommand{\Rep}{\operatorname{Rep}}
\newcommand{\bv}{\mathbf{v}}
\newcommand{\bw}{\mathbf{w}}


\newcommand{\calM}{\mathcal{M}}
\newcommand{\tcalM}{\widetilde{\mathcal{M}}}
\newcommand{\frM}{\mathfrak{M}}

\newcommand{\tO}{\widetilde{O}}
\newcommand{\frb}{\mathfrak{b}}
\newcommand{\frp}{\mathfrak{p}}
\newcommand{\fru}{\mathfrak{u}}
\newcommand{\calP}{\mathcal{P}}
\newcommand{\calU}{\mathcal{U}}
\newcommand{\frP}{\mathfrak{P}}
\newcommand{\frU}{\mathfrak{U}}
\newcommand{\sirr}{\mathrm{irr}}
\newcommand{\sres}{\mathrm{res}}

\newcommand{\calE}{\mathcal{E}}
\newcommand{\calK}{\mathcal{K}}
\newcommand{\hcalK}{\widehat{\mathcal{K}}}
\newcommand{\bfT}{\mathbf{T}}
\newcommand{\bfa}{\mathbf{a}}
\newcommand{\mx}{\mathfrak{m}}
\newcommand{\hmx}{\widehat{\mathfrak{m}}}
\newcommand{\Stab}{\operatorname{Stab}}

\newcommand{\bfL}{\mathbf{L}}
\newcommand{\bfO}{\mathbf{O}}
\newcommand{\tbO}{\widetilde{\mathbf{O}}}
\newcommand{\bfH}{\mathbf{H}}

\hyphenation{semi-sim-ple}
\hyphenation{eigen-space}
\hyphenation{sym-plecto-morphism}


\title[Moduli spaces of meromorphic connections and quiver varieties]%
{Moduli spaces of meromorphic connections and quiver varieties}

\author{Kazuki Hiroe}
\address{Department of Mathematics, Josai University, 
1-1 Keyakidai, Sakado City, Saitama 350-0295, Japan}
\email{kazuki@josai.ac.jp}

\author{Daisuke Yamakawa}
\address{Department of Mathematics,
Tokyo Institute of Technology,
2-12-1 Ookayama, Meguro-ku, Tokyo 152-8551, Japan}
\email{yamakawa@math.titech.ac.jp}
\thanks{The second named author is supported by 
JSPS Grant-in-Aid for Young Scientists (B) 
Grant Number 24740104.}

\subjclass[2010]{Primary~53D30; Secondary~16G20, 53D20}
\keywords{Moduli spaces of meromorphic connections, quiver varieties, 
additive irregular Deligne-Simpson problem.}

\begin{document}
\mathtoolsset{showonlyrefs=true}

\maketitle

\begin{abstract}
We describe the moduli spaces of meromorphic connections on 
trivial holomorphic vector bundles over the Riemann sphere 
with at most one (unramified) irregular singularity and 
arbitrary number of simple poles as Nakajima's quiver varieties.
This result enables us to solve partially 
the additive irregular Deligne-Simpson problem. 
\end{abstract}

\tableofcontents

\section{Introduction}

This paper is devoted to study the relationship between 
two families of complex symplectic manifolds:
one is certain moduli spaces of 
systems of linear ordinary differential equations 
with rational coefficients 
(i.e., meromorphic connections 
on trivial holomorphic vector bundles 
over the complex projective line $\bbP^1$),
and the other is 
Nakajima's quiver varieties~\cite{Nak94}
(with the real parameter taken to be zero and no framing).

\medskip

Take positive integers $k_1, k_2, \dots , k_m$, 
and for each $i=1,2, \dots , m$ 
let $O_i$ be a coadjoint orbit of the complex Lie group 
$G_{k_i} := \GL_n(\C[z_i]/(z_i^{k_i}))$,
where $z_i$ is an indeterminate.
Define 
\[
\calM^*_s = \set{(A_i) \in \prod_{i=1}^m O_i}%
{\text{$(A_i)$ is ``stable''},\ \sum_{i=1}^m \pi_\sres(A_i) =0}/\GL_n(\C),
\] 
where 
$\pi_\sres \colon (\Lie G_{k_i})^* \to \gl_n(\C)^*$ is the projection.
We do not give here the definition of ``stability'' (see \dfnref{dfn:stable}), 
which is some open condition to make the quotient a geometrically nice space.
Since the map $(A_i) \mapsto \sum_i \pi_\sres(A_i)$ is a moment map 
for the diagonal action of $\GL_n(\C)$ on $\prod_i O_i$, 
one can show that $\calM^*_s$ is a (smooth) complex symplectic manifold.

To view it as a certain moduli space of meromorphic connections on 
the trivial holomorphic vector bundle $\calO_{\bbP^1}^{\oplus n}$,
take distinct points $t_1, t_2, \dots , t_m \in \bbP^1$ 
and (for simplicity) a standard coordinate on $\bbP^1$ 
so that $z(t_i) \neq \infty$.
For each $i=1,2, \dots ,m$, identify each $z_i$ with a coordinate $z-z(t_i)$
and embeds $(\Lie G_{k_i})^*$ into $\gl_n(\C[z_i^{-1}]) z_i^{-1}dz_i$ 
using the residue and trace operations.
Then each $(A_i) \in \prod_i O_i$ gives a meromorphic connection 
$d-\sum_i A_i$ on $\calO_{\bbP^1}^{\oplus n}$ with poles at $t_i$'s;
it is holomorphic at $\infty$ if and only if $\sum_i \pi_\sres(A_i) =0$.

The space $\calM^*_s$ cannot be thought of 
as a ``moduli space of meromorphic connections''; 
we are not taking into account 
meromorphic connections on non-trivial holomorphic vector bundles.
However, it inherits many interesting structures 
from the moduli space of meromorphic connections.
A problem asking when $\calM^*_s$ is non-empty 
is called the {\em additive (irregular) Deligne-Simpson problem}.
It was solved by Crawley-Boevey~\cite{CB03DS} 
in the case where $k_i=1$ for all $i$ 
(namely, in the case of logarithmic connections)
and by Boalch~\cite{Boa12} in the case where 
one of $k_i$, say $k_1$, is less than $4$ 
and the others are equal to $1$,
and furthermore $O_1$ contains a(n unramified) ``normal form'' 
(see e.g. \cite[Definition~10]{Yam11} or Remark~\ref{rem:normal}).
Their approach is to describe $\calM^*_s$ as a quiver variety;
then the problem is immediately solved since  
we have a criterion~\cite{CB01} for the non-emptiness of quiver varieties.
We briefly review their results below.

\medskip

Let $\sfQ$ be a finite quiver (directed graph) 
with the set of vertices $\sfQ^v$.
To $(\bv, \zeta) \in \Z_{\geq 0}^{\sfQ^v} \times \C^{\sfQ^v}$,
one can associate a complex symplectic manifold $\frM_\sfQ^s(\bv,\zeta)$,
called the quiver variety.
This is some smooth open subset 
(the ``stable'' part) of the holomorphic symplectic quotient 
$T^* \Rep_\sfQ(V) \GIT_\zeta G_V$ at the level specified by $\zeta$,
where $\Rep_\sfQ(V)$ is the space of representations of $\sfQ$ 
over a collection of vector spaces $V=(V_i)_{i \in \sfQ^v}$ with dimension $\bv$, 
and $G_V := \prod_i \GL(V_i)$ (see \secref{subsec:quiver} for the precise definition).
They have rich geometric structures 
related to the gauge theory, singularity theory 
and representation theory 
of (symmetric) Kac-Moody algebras/quantum enveloping algebras 
of Drinfel'd-Jimbo~(see \cite{Nak94,Nak98,Nak01AMS,Nak01,Nak04} and references therein).

Crawley-Boevey~\cite{CB03DS} showed that if $k_i =1$ for all $i$, 
each $\calM^*_s$ is isomorphic to a quiver variety. 
One can easily check that his isomorphism 
intertwines the symplectic structures (see e.g.~\cite{Yam08}).
Furthermore, in this case 
the so-called reflection functors~\cite{CB01,Nak03} 
$S_i \colon \frM_\sfQ^s(\bv,\zeta) \xrightarrow{\simeq} 
\frM_\sfQ^s(s_i(\bv),s_i^T(\zeta))$, $i \in \sfQ^v$ 
of the quiver varieties 
(they satisfy the defining relation for the simple reflections $s_i$
generating the Weyl group associated to the quiver)
are expressed in terms of an ``additive'' analogue~\cite{DR00,Yam11} 
of Katz's middle convolutions~\cite{Katz,Ari10} 
and the tensor operation by rank one connections (see~\cite{Boa09,Yam10}).

Also, Boalch~\cite{Boa08,Boa12} showed that 
if $k_1 \leq 3$ and $k_i=1$ for $i \neq 1$,
and if $O_1$ contains a normal form,
then $\calM^*_s$ is symplectomorphic to a quiver variety.
In the proof he first described $\calM^*_s$ as 
(the stable part of) 
a holomorphic symplectic quotient 
of some larger symplectic manifold $\tcalM^*$,
called the {\em extended moduli space}.
This space is, roughly speaking, 
a certain moduli space of 
meromorphic connections equipped with a sort of framing at each pole,
and was originally introduced by himself~\cite{Boa01} to 
construct intrinsically 
the ``space of singularity data'' of 
Jimbo et al.~\cite{JMU},
the phase space for the (lifted) isomonodromic deformation of meromorphic connections.
He showed that in the above case there is an equivariant symplectomorphism 
$\tcalM^* \simeq (T^* \GL_n(\C))^{m-1} \times T^* \Rep_\sfQ(V)$ for some $\sfQ$ and $V$, 
and that it induces a symplectomorphism between $\calM^*_s$ and a quiver variety. 
He also constructed Weyl group symmetries of families of spaces $\calM^*_s$ 
whose parameter behavior coincides with that of $(\bv,\zeta)$ 
under the reflection functors through his isomorphism.

In \cite[Appendix~C]{Boa08} he further ``claimed'' that 
there is still an equivariant symplectomorphism  
$\tcalM^* \simeq (T^* \GL_n(\C))^{m-1} \times T^* \Rep_\sfQ(V)$ for some $\sfQ$ and $V$
when one drops the assumption $k_1 \leq 3$,
and explained how to construct $\sfQ$ and $V$.
However it seems that 
only the existence of an equivariant isomorphism (not symplectomorphism) 
was shown there.

\medskip

In this paper we justify his claim and 
show that $\calM^*_s$ is symplectomorphic to a quiver variety
if $k_i = 1$ for $i \neq 1$ and $O_1$ contains a normal form.
Thanks to \cite{CB01}, our result immediately gives,
under the same assumption, 
an answer to the additive irregular Deligne-Simpson problem,
in terms of the root system attached to the quiver. 


After we wrote the first version of this paper, 
the first named author~\cite{Hir} used the methods developed here to
solve the additive irregular Deligne-Simpson problem for the case 
where each $O_i$ contains a normal form (with $k_i$ arbitrary),
although in this case the space $\calM^*_s$ may not be a quiver variety
(see~\cite[p.~3]{Boa08}).

The organization of this paper is as follows.
In \secref{sec:pre}, we introduce Boalch's extended moduli spaces
and review their basic properties.
\secref{sec:str-ext-moduli} is devoted to prove 
our main result stated in \secref{subsec:quiver-moduli}.
Also, we put an appendix (\secref{sec:app}) consisting of 
some basic facts on the classical formal reduction theory 
of meromorphic connections 
and on coadjoint orbits of general linear groups,
which will be used in 
Sections \ref{sec:pre} and \ref{sec:str-ext-moduli}.


\section{Preliminary notions and facts}\label{sec:pre}

In this section we define the extended moduli spaces 
and show their basic properties following Boalch~\cite{Boa01}.
Strictly speaking, our definition is a slight generalization of 
that in \cite{Boa01}, where the irregular types (see below) are assumed to 
have regular semisimple top coefficient.
Although the generalization is straightforward and already known for specialists,
we decided to add the contents of this section to our paper 
for the reader's convenience.

Throughout this section we fix a non-empty finite subset $D$ of $\bbP^1$. 
Let $\calO \equiv \calO_{\bbP^1}$ (resp.\ $\Omega^1 \equiv \Omega^1_{\bbP^1}$) be 
the sheaf of holomorphic functions
(resp.\ one-forms) on $\bbP^1$ and
$\Omega^1(*D)$ (resp.\ $\Omega^1(\log D)$)
the sheaf of meromorphic (resp.\ logarithmic) one-forms
on $\bbP^1$ with poles on $D$ (and no pole elsewhere).
For $t \in \bbP^1$, let $(\hcalO_t,\hmx_t)$ be 
the formal completion of the local ring $(\calO_t,\mx_t)$
and $\calK_t$ (resp.\ $\hcalK_t$) the field of fractions
of $\calO_t$ (resp.\ $\hcalO_t$).
For $\calO_t$-module $M$, we set $\wh{M} = \hcalO_t \otimes_{\calO_t} M$.

\subsection{Extended moduli spaces}

Let $G$ be a (connected) complex reductive group 
and $\frg = \Lie G$ its Lie algebra.
Fix a maximal torus $\frt$ of $\frg$. 
An (unramified) {\em irregular type} at $t \in \bbP^1$ is an element of 
\[
\frt(\hcalK_t)/\frt(\hcalO_t) = (\frt \otimes_\C \hcalK_t)/( \frt \otimes_\C \hcalO_t).
\]
In terms of a local coordinate $z$ vanishing at $t$,
an irregular type may be regarded as an element of
\[
\frt\fl{z}/\frt\fp{z} \simeq z^{-1}\frt[z^{-1}].
\]

Let $\calE \to \bbP^1$ be a holomorphic principal $G$-bundle on $\bbP^1$.
By a {\em meromorphic connection} on $\calE$ with poles on $D$
we mean a $\frg$-valued meromorphic one-form $A$ on $\calE$ 
with poles on $\calE |_D$ satisfying 
\[
\Ad_a (R_a^* A) = A \quad (a \in G),
\qquad
A(X^*) = -X \quad (X \in \frg),
\]
where $R_a$ is the canonical right action on $\calE$ by $a$ 
and $X^*$ is the fundamental vector field associated to $X$ 
(i.e., $X^*_p = \partial_t (p \cdot e^{tX}) |_{t=0}$ for $p \in \calE$).
Beware that our sign convention for the second condition
is different to the standard one.

\begin{dfn}
Let $\calE \to \bbP^1$ be a holomorphic principal $G$-bundle on $\bbP^1$,
$A$ a meromorphic connection on $\calE$ with poles on $D$
and $\bfa =(a_t)_{t \in D} \in \prod_{t \in D} \calE_t$ 
a collection of points of the fibers $\calE_t$, $t \in D$ of $\calE$.
The triple $(\calE,A,\bfa)$ is called 
a {\em compatibly framed meromorphic connection} on $(\bbP^1,D)$ 
if for each $t \in D$, there exist 
a germ $\wt{a}_t$ at $t$ of local sections of $\calE$ through $a_t$
and an irregular type $T_t \in \frt(\hcalK_t)/\frt(\hcalO_t)$
such that 
\begin{equation}
\wt{a}_t^*(A) - dT_t \in \frg \otimes_\C \Omega^1(\log t)_t.
\label{eq:irregular}
\end{equation}
\end{dfn}

It is known that the above $T_t$ is unique (see \remref{rem:irr-unique});
so it is called {\em the} irregular type of $(\calE,A,\bfa)$ at $t$.

\begin{rem}
(i) Any local coordinate $z$ vanishing at $t$ induces an identification
\[
\frg \otimes_\C \Omega^1(\log t)_t
\simeq z^{-1}\frg\cp{z}dz.
\]
Therefore in terms of the coordinate $z$, 
condition~\eqref{eq:irregular} 
means that $z(\wt{a}_t^*(A)-dT_t)$ has no terms of negative degree.

(ii) If $\calE = \bbP^1 \times G$, 
then pulling back via the trivial section $\bbP^1 \to \calE$, $x \mapsto (x,1)$
enables us to regard $A$ 
as a $\frg$-valued one-form on $\bbP^1$ with poles on $D$
and each $a_t$ as an element of $G$.
In this case, $(\calE,A,\bfa)$ is compatibly framed if and only if  
each $a_t$ admits a lift $\wt{a}_t \in G(\calO_t)$ such that 
\[
\wt{a}_t^{-1}[A] - dT_t \in \frg \otimes_\C \Omega^1(\log t)_t,
\]
where $\wt{a}_t^{-1}[A]$ is the gauge transform of $A$ 
(expressed as $\wt{a}_t^{-1} A \wt{a}_t - \wt{a}_t^{-1} d\wt{a}_t$ in any representation).
\end{rem}

\begin{dfn}
Two compatibly framed meromorphic connections 
$(\calE,A,\bfa)$ and $(\calE',A',\bfa')$ on $(\bbP^1,D)$ 
are said to be {\em isomorphic}
if there exists an isomorphism 
$\varphi \colon \calE \simeq \calE'$
such that $\varphi^*A' = A$ and $\varphi(a_t)=a'_t$ for each $t \in D$. 
\end{dfn}

If $(\calE,A,\bfa)$ and $(\calE',A',\bfa')$ are isomorphic,
then they have common irregular type at each $t \in D$.

Let $\bfT =(T_t)_{t \in D}$ be a collection of irregular types 
$T_t \in \frt(\hcalK_t)/\frt(\hcalO_t)$.
\begin{dfn}
The {\em extended moduli space} $\tcalM^*(\bfT)$ is 
the set of isomorphism classes of 
compatibly framed meromorphic connections $(\calE,A,\bfa)$
with irregular type $T_t$ at each $t \in D$
such that $\calE \simeq \bbP^1 \times G$.
\end{dfn}

\subsection{Extended orbits}\label{subsec:ext-orbit}

The extended moduli space $\tcalM^*(\bfT)$ 
is known to be expressed as a (holomorphic)
symplectic quotient of the product of 
some complex symplectic manifolds,
called ``extended orbits''.

Fix an irregular type $T \in \frt(\hcalK_t)/\frt(\hcalO_t)$
at some $t \in \bbP^1$.
Let $k>1$ be the pole order of $dT$ when $T \neq 0$, and $k:=1$ when $T=0$.
Set 
\[
G_k = G(\hcalO_t/\hmx_t^k) = G(\calO_t/\mx_t^k), \quad
\frg_k = \Lie G_k = \frg(\hcalO_t/\hmx_t^k),
\]
and let $G(\hcalO_t)_k$ be the kernel of the group homomorphism 
$G(\hcalO_t) \to G_k$ induced from 
the truncation $\hcalO_t \to \hcalO_t/\hmx_t^k$; 
so $G_k = G(\hcalO_t)/G(\hcalO_t)_k$.
Also set 
\[
B_k = G(\hcalO_t)_1/G(\hcalO_t)_k, \quad
\frb_k = \Lie B_k.
\]
The natural semidirect product decomposition 
$G_k = B_k \rtimes G$ induces 
direct sum decompositions
$\frg_k = \frb_k \oplus \frg$
and $\frg_k^* = \frb_k^* \oplus \frg^*$.
We denote by $\pi_\sirr \colon \frg_k^* \to \frb_k^*$ and 
$\pi_\sres \colon \frg_k^* \to \frg^*$ the projections.
The pairing
\begin{equation}\label{eq:pair}
\frg(\hcalK_t) \otimes_\C \left( \frg \otimes_\C \Omega^1(*t)^\wedge_t \right) \to \C;
\quad X \otimes A \mapsto (X,A) := \res_t \tr XA
\end{equation}
induces embeddings
\begin{align*}
\frg_k^* &\hookrightarrow 
\left( \frg \otimes_\C \Omega^1(*t)^\wedge_t \right) /
\left( \frg \otimes_\C \wh{\Omega}^1_t \right) 
= \frg \otimes_\C (\Omega^1(*t)^\wedge_t/\wh{\Omega}^1_t), \\
\frb_k^* &\hookrightarrow 
\left( \frg \otimes_\C \Omega^1(*t)^\wedge_t \right) /
\left( \frg \otimes_\C \Omega^1(\log t)^\wedge_t \right)
= \frg \otimes_\C (\Omega^1(*t)^\wedge_t/\Omega^1(\log t)^\wedge_t),
\end{align*}
by which we may regard $dT$ as an element of $\frb_k^*$.
Note that in terms of a local coordinate $z$ vanishing at $t$,
the above embeddings are respectively 
expressed as $\frg_k^* \hookrightarrow \frg[z^{-1}]dz/z$, $\frb_k^* \hookrightarrow \frg[z^{-1}]dz/z^2$.

\begin{dfn}
Let $O_B$ be the $B_k$-coadjoint orbit through $dT$.
Define
\[
\tO = \{\, (a,A) \in G \times \frg_k^* \mid \pi_\sirr(\Ad_a A) \in O_B\,\},
\]
which we call the {\em extended orbit} associated to $T$.
\end{dfn}

The following proposition gives a structure of complex symplectic manifold on 
the extended orbit $\tO$.

\begin{prp}[cf.~{\cite[Lemmas 2.2 and 2.4]{Boa01}}]\label{prop:decouple}
There are canonical bijections:
\[
\tO \simeq T^* G_k \times O_B \GIT B_k \simeq T^* G \times O_B,
\]
where in the middle term $B_k$ acts by the coadjoint action on $O_B$
and by the standard action on $T^* G_k$ 
coming from the left multiplication.
Furthermore, the last bijection is a symplectomorphism.
\end{prp}

\begin{proof}
A moment map for the $B_k$-action on $T^* G_k \times O_B$ is given by 
\[
\mu \colon T^*G_k \times O_B \to \frb_k^*; \quad
(g,A,B) \mapsto -\pi_\sirr(\Ad^*_g A) +B,
\]
where $T^* G_k \simeq G_k \times \frg_k^*$ 
via the left trivialization.
Define
\begin{equation}\label{eq:chi}
\chi \colon \mu^{-1}(0) \to G \times \frg_k^*; \quad
(g,A,B) \mapsto (g(t),A).
\end{equation}
It is straightforward to check that $\chi$ induces a bijection 
$T^* G_k \times O_B \GIT B_k \xrightarrow{\simeq} \tO$.
A bijection $\tO \xrightarrow{\simeq} T^* G \times O_B$ 
and its inverse are respectively given by
\[
(a,A) \mapsto (a,\pi_\sres(A), \pi_\sirr(\Ad_a A)), 
\quad
(a,R,B) \mapsto (a,\Ad_a^{-1}(B)+R).
\]
Through this bijection, the following map gives a section of $\chi$:
\[
T^*G \times O_B \to T^*G_k \times O_B;\quad
(a,R,B) \mapsto (a,\Ad_a^{-1}(B)+R,B),
\]
which is easily seen to be symplectic.
\end{proof}

It is easy to see that the $G$-action on $\tO$ defined by 
$f \cdot (a,A) = (af^{-1},\Ad_f A)$
is Hamiltonian with moment map
\[
\mu_G \colon \tO \to \frg^*; \quad \mu_G (a,A) = \pi_\sres (A) \in \frg^*.
\]

\begin{prp}[cf.~{\cite[Proposition 2.1]{Boa01}}]\label{prop:ext-moduli}
Let $\bfT =(T_t)_{t \in D}$ be a collection of 
irregular types $T_t \in \frt(\hcalK_t)/\frt(\hcalO_t)$.
For $t \in D$ let $\tO_t$ be the extended orbit associated to $T_t$ 
and set $\tbO = \prod_{t \in D} \tO_t$.
Then there exists a canonical bijection between 
the extended moduli space $\tcalM^*(\bfT)$ and 
the symplectic quotient $\tbO \GIT G$
of $\tbO$ by the action of $G$.
\end{prp}

\begin{proof}
Suppose that an element of $\tcalM^*(\bfT)$ is given.
Take a representative $(\calE,A,\bfa)$ of it so that $\calE = \bbP^1 \times G$. 
Then pulling back via the trivial section $\bbP^1 \to \calE$, $x \mapsto (x,1)$,
we may regard $A$ as a $\frg$-valued one-form on $\bbP^1$ with poles on $D$.
By the definition, for each $t \in D$, 
the compatible framing $a_t$ is an element of $G$ 
and admits a lift $\wt{a}_t \in G(\calO_t)$ such that 
\[
\wt{a}_t^{-1}[A] - dT_t \in \frg \otimes_\C \Omega^1(\log t)_t.
\]
Let $A_t$ be the element of
\[
\left( \frg \otimes_\C \Omega^1(*t)_t \right) / 
\left( \frg \otimes_\C \Omega^1_t \right) 
= \frg \otimes_\C (\Omega^1(*t)_t / \Omega^1_t)
= \frg \otimes_\C (\Omega^1(*t)^\wedge_t / \wh{\Omega}^1_t)
\]
represented by the germ of $A$ at $t$.
The above condition for $\wt{a}_t$ now implies that the pair $(a_t^{-1},A_t)$ 
is contained in $\tO_t$;
thus we obtain an element of $\tbO = \prod_{t \in D} \tO_t$.
The condition that $A$ is holomorphic away from $D$ 
is expressed as
\[
\sum_{t \in D} \res_t A_t =0,
\]
which is exactly the moment map condition $\sum_{t \in D} \mu_G(a_t^{-1},A_t) =0$.
Changing the choice of representative $(\calE,A,\bfa)$ 
corresponds to the simultaneous action of $G$ on $\tbO$;
so we may define a map $\tcalM^*(\bfT) \to \tbO \GIT G$.
The bijectivity of it immediately follows from 
the standard fact  
that taking principal part at each pole gives a bijection
\[
\Gamma(\bbP^1,\Omega^1(*D)) \simeq 
\set{(\alpha_t)_{t \in D} \in \prod_{t \in D} (\Omega^1(*t)_t /\Omega^1_t)}
{\sum_{t \in D} \res_t \alpha_t =0}.
\]
\end{proof}

In what follows, we identify the extended moduli space $\tcalM^*(\bfT)$
with the symplectic quotient $\tbO \GIT G$ 
via the bijection given above.

\subsection{Extended orbits and $G_k$-coadjoint orbits}\label{subsec:str-ext-orbit}

In this subsection we will give some relationship 
between extended orbits and $G_k$-coadjoint orbits.

Fix an irregular type $T \in \frt(\hcalK_t)/\frt(\hcalO_t)$ 
at some $t \in \bbP^1$ and let $k$ be as in \secref{subsec:ext-orbit}.
Set $H = \{\, h \in G \mid \Ad_h T = T \,\}$ and let $\frh$ be its Lie algebra.
The pairing \eqref{eq:pair} induces a non-degenerate pairing
\[
\frh \otimes_\C \left( \frh \otimes_\C (\Omega^1(\log t)_t^\wedge / \wh{\Omega}^1_t) \right)
\to \C.
\]
In what follows, 
we identify $\frh^*$ with $\frh \otimes_\C (\Omega^1(\log t)_t^\wedge / \wh{\Omega}^1_t)$ 
using this pairing.

Let $H$ act on the extended orbit $\tO$ associated to $T$ by $h \cdot (a,A)=(h a,A)$.
This is well-defined because if $\Ad^*_b \pi_\sirr(a A a^{-1}) = dT$
for some $b \in B_k$, then $hbh^{-1} \in B_k$ and 
\[
\Ad^*_{hbh^{-1}} \pi_\sirr(\Ad_{ha} A) = \Ad_h (dT) = dT.
\]
For $g \in G(\hcalK_t)$ and $A \in \frg \otimes_\C \Omega^1(*t)^\wedge_t$, 
let $g[A] = g A g^{-1} + dg \cdot g^{-1}$.

\begin{prp}\label{prop:reduction}
Let $A \in \frg \otimes_\C \Omega^1(*t)^\wedge_t$ 
and suppose 
$A - dT \in \frg \otimes_\C \Omega^1(\log t)^\wedge_t$.
Then there exists $\wh{b} \in G(\hcalO_t)_1$ such that 
\[
\wh{b}[A] - dT \in \frh \otimes_\C \Omega^1(\log t)^\wedge_t.
\]
Furthermore, $\res_t(\wh{b}[A]) \in \frh$ does not depend on $\wh{b}$. 
\end{prp}

To prove this we need some basic fact  
on the classical formal reduction theory 
of meromorphic connections,
which will be shown in \secref{subsec:reduction} (\crlref{cor:reduction}), 
and the following lemma:
\begin{lmm}\label{lem:exponent}
Assume $k>1$ and let 
$T' \in \frt(\hcalK_t)/\frt(\hcalO_t)$ be an irregular type at $t$ 
of pole order at most $k-1$.
Suppose that $g \in G_k$ and $L, L' \in \frh^*$ satisfy
\[
\Ad^*_g(dT + L)=dT' + L' \in \frg_k^*.
\]
Then the value $g(t) \in G$ of $g$ at $t$ satisfies 
$\Ad_{g(t)}T=T'$, $\Ad_{g(t)}L =L'$.
\end{lmm}

\begin{proof}
Replacing $g$, $T'$, $L'$ with $g(t)^{-1}g$, $\Ad_{g(t)}^{-1}T'$, $\Ad_{g(t)}^{-1}L'$,
respectively, we may assume $g(t)=1$.
Also, taking a faithful representation of $G$ if necessary, 
we may assume $G=\GL(n,\C)$.
Fix a local coordinate $z$ vanishing at $t$
and write
\[
g = \sum_{i=0}^{k-1} g_i z^i,
\quad
T = \sum_{i=1}^{k-1} T_i z^{-i},
\quad
T' = \sum_{i=1}^{k-1} T'_i z^{-i}.
\]
Set 
\begin{align*}
\frh_i &= \bigcap_{j=i+1}^{k-1} \Ker \ad_{T_j},
\quad (i=0,1, \dots ,k-2),
\qquad
\frh_{k-1}=\frg,
\\
\frh'_i &= \range \left( \ad_{T_{i+1}} |_{\frh_{i+1}} \right)
\quad (i=0,1, \dots ,k-2).
\end{align*}
The assumption is expressed as
\begin{align}
\sum_{j=0}^{k-1-i} (j+i) (g_j T_{j+i} - T'_{j+i} g_j) &=0 
\quad (i=1,\dots ,k-1),
\label{eq:stabilizer1}
\\
g_0 \res_t (L) - \res_t (L') g_0 &= \sum_{j=1}^{k-1} j (g_j T_j - T'_j g_j).
\label{eq:stabilizer2}
\end{align}
Note that $g_0=g(t)=1$.
Therefore equality \eqref{eq:stabilizer1} for $i=k-1$ implies $T_{k-1}=T'_{k-1}$.
If $k>2$, equality \eqref{eq:stabilizer1} for $i=k-2$ reads 
\[
(k-2)(T_{k-2} - T'_{k-2}) + (k-1)[g_1,T_{k-1}] =0.
\] 
Observe that the first term on the left hand side 
lies in $\Ker \ad_{T_{k-1}}=\frh_{k-2}$,
while the second term is contained in $\range \ad_{T_{k-1}}=\frh'_{k-2}$.
Since we have a decomposition 
\[
\frg = \frh_{k-2} \oplus \frh'_{k-2},
\]
we see that the both terms are zero;
hence $T_{k-2}=T'_{k-2}$ and $g_1 \in \frh_{k-2}$.
If $k>3$, look at \eqref{eq:stabilizer1} for $i=k-3$ and 
use the decomposition $\frg = \frh_{k-3} \oplus \frh'_{k-3} \oplus \frh'_{k-2}$;
then we obtain $T_{k-3}=T'_{k-3}$, $g_1 \in \frh_{k-3}$ and $g_2 \in \frh_{k-2}$.
Iterating this argument,
we finally obtain
\[
T=T', \qquad g_i \in \frh_i \quad (i=1,2, \dots ,k-2).
\]
Now look at \eqref{eq:stabilizer2};
the left hand side is contained in $\frh = \frh_0$ 
and each $[g_j,T_j]$ on the right hand side is contained in $\frh'_{j-1}$.
Using the decomposition
\[
\frg = \frh_0 \oplus \bigoplus_{i=0}^{k-2} \frh'_i,
\]
we obtain 
$g_i \in \frh_{i-1}\, (i=1, \dots ,k-2)$ and $L = L'$.
\end{proof}

\begin{rem}\label{rem:irr-unique}
The above in particular shows that 
the irregular type of any compatibly framed meromorphic connection 
is unique at each pole. 
\end{rem}

\begin{proof}[Proof of \prpref{prop:reduction}]
Take a local coordinate $z$ vanishing at $t$.
Then \crlref{cor:reduction} provides a desired $\wh{b}$.
The rest assertion follows from \lmmref{lem:exponent}.
\end{proof}

\begin{crl}\label{cor:exponent}
For any $(a,A) \in \tO$, 
there exists a unique $L \in \frh^*$ such that
\[
\Ad^*_{ba}(A) = dT + L \in \frg_k^*
\]
for some $b \in B_k$.
\end{crl}

We define a map $\mu_H \colon \tO \to \frh^*$ by 
$\mu_H(a,A) = - L$,
where $L \in \frh^*$ is given above.

\begin{prp}[cf.~{\cite[Lemma 2.3]{Boa01}}]\label{prop:exponent}
{\rm (i)} The map $\mu_H$ is a moment map for the $H$-action on $\tO$.

{\rm (ii)} For $L \in \frh^*$,
the symplectic quotient $\tO \GIT_{O(-L)} H$ of $\tO$ 
along the coadjoint orbit $O(-L)$ through $-L$ by the $H$-action 
is naturally isomorphic to the $G_k$-coadjoint orbit $O$ through $dT+L$.
\end{prp}

\begin{proof}
(i) Define an injective map 
$\iota \colon G_k \times \frh^* \to \mu^{-1}(0) \subset T^* G_k \times O_B$ 
by 
\[
\iota(g,R)=(g,\Ad^*_{g^{-1}}(dT+R),dT).
\]
Then \crlref{cor:exponent} implies that the composite 
$\chi \circ \iota \colon G_k \times \frh^* \to \tO$,
where $\chi$ is defined in \eqref{eq:chi}, is surjective.
Let $\prj \colon \mu^{-1}(0) \to T^* G_k$ be the first projection.
Since the $O_B$-component of $\iota$ is constant,
the pull-back of the symplectic structure on $T^* G_k$
along $\prj \circ \iota$ coincides with the pull-back of the
symplectic structure on $\tO$ along $\chi \circ \iota$.
Let $H$ act on $T^* G_k$ by the standard action
coming from the left multiplication,
on $O_B$ by conjugation and on $G_k \times \frh^*$
by $h \cdot (g,R)=(hg,R)$.
Then $\mu^{-1}(0)$ is $H$-invariant and 
$\chi, \iota, \prj$ are all $H$-equivariant.
A moment map on $T^* G_k$ is given by
\[
\nu \colon T^*G_k \to \frh^*; \quad
(g,A) \mapsto -\delta(\pi_\sres(\Ad^*_g A)),
\]
where $\delta \colon \frg^* \to \frh^*$ is the projection.
The statement now follows from the fact that
the pull-back of $\nu$ along $\prj \circ \iota$
is the pull-back of $\mu_H$ along $\chi \circ \iota$.

(ii) We have a $G_k$-action on $\tO$ given by
$g \cdot (a,A) = (a g(t)^{-1}, \Ad^*_g(A))$.
This action clearly commutes with the $H$-action and 
the second projection $(a,A) \mapsto A \in \frg_k^*$ gives a moment map.
Therefore it is sufficient to check that each fiber of the projection 
\[
\mu_H^{-1}(-L) \to O;\quad (a,A) \mapsto A
\]
is exactly a single orbit of the stabilizer $\Stab_H(L) \subset H$ of $L$. 
So assume $(a, A), (a', A) \in \mu_H^{-1}(-L)$.
Then there exist $b, b' \in B_k$ such that
\[
\Ad^*_{ba}(A) = \Ad^*_{b'a'}(A) = dT + L \in \frg_k^*.
\]
Letting $g=b'a'a^{-1}b^{-1} \in G_k$, we then obtain
\[
\Ad^*_g(dT + L)=dT + L \in \frg_k^*.
\]
\lmmref{lem:exponent} shows $a'a^{-1}=g(t) \in \Stab_H(L)$. 
\end{proof}

\begin{rem}\label{rem:normal}
The above $dT + L$ for each $L \in \frh$ is 
what we call a {\em normal form} in Introduction. 
\end{rem}

\begin{crl}\label{cor:moduli1}
Let $\bfT$, $\tbO$ be as in \prpref{prop:ext-moduli}.
Set 
\[
H_t = \{\, h \in G \mid \Ad_h T_t = T_t\,\}, \quad \frh_t = \Lie H_t
\quad (t \in D)
\]
and $\bfH = \prod_{t \in D} H_t$.
Take arbitrary $\bfL=(L_t)_{t \in D} \in \bigoplus_{t \in D} \frh_t^*$
and for $t \in D$, let $O_t$ be the $G_{k_t}$-coadjoint orbit 
through $dT_t + L_t \in \frg_{k_t}^*$
(where $k_t$ is the pole order of $dT_t$ when $T_t \neq 0$ and $k_t=1$ otherwise). 
Then there exists a canonical bijection
\[
\tcalM^*(\bfT) \GIT_{O(-\bfL)} \bfH 
\simeq \bfO \GIT G
\]
between the symplectic quotient of $\tcalM^*(\bfT)$  
along the $\bfH$-coadjoint orbit $O(-\bfL)$ through $-\bfL$ by the $\bfH$-action 
and that of the product $\bfO := \prod_{t \in D} O_t$ by the $G$-action.
\end{crl}

Note that the above $\bfO \GIT G$ may be singular.
The following immediately follows 
from the arguments in the proof of \prpref{prop:ext-moduli} 
and the above corollary:
\begin{crl}[cf.~{\cite[Proposition 2.1]{Boa01}}]\label{cor:moduli2}
Let $\bfT$, $\bfL$, $\bfO$ be as above.
Let $\calM^*(\bfT, \bfL)$ be the set of isomorphism classes of 
meromorphic connections $A \in \frg \otimes_\C \Omega^1(*D)$ 
on the trivial principal $G$-bundle $\bbP^1 \times G$ 
such that for each $t \in D$ there exists $g \in G(\calO_t)$ satisfying
\[
g[A] - dT_t - L_t \in \frg \otimes_\C \Omega^1_t.
\] 
Then there exists a canonical bijection between 
$\calM^*(\bfT,\bfL)$ and the symplectic quotient 
$\bfO \GIT G$.
\end{crl}

\section{Structure of (extended) moduli spaces}\label{sec:str-ext-moduli}

Throughout this section we assume $G=\GL(n,\C)$.

\subsection{Quivers and quiver varieties}\label{subsec:quiver}

Recall that a (finite) {\em quiver} is a quadruple 
$\sfQ=(\sfQ^v,\sfQ^a,\sfs,\sft)$ consisting of two finite sets 
$\sfQ^v, \sfQ^a$ (the vertices and arrows) and two maps 
$\sfs, \sft \colon \sfQ^a \to \sfQ^v$ (the source and target maps).
A {\em representation} (over $\C$) of $\sfQ$ is a pair $(V,\rho)$ 
consisting of a collection $V=(V_i)_{i \in \sfQ^v}$ of 
$\C$-vector spaces and a collection $\rho=(\rho_\alpha)_{\alpha \in \sfQ^a}$
of linear maps $\rho_\alpha \colon V_{\sfs(\alpha)} \to V_{\sft(\alpha)}$.
A {\em morphism} $\varphi \colon (V,\rho) \to (V',\rho')$ 
between two representations is a collection $\varphi=(\varphi_i)_{i \in \sfQ^v}$
of linear maps $\varphi_i \colon V_i \to V'_i$ such that 
$\varphi_{\sft(\alpha)} \circ \rho_\alpha = \rho_\alpha \circ \varphi_{\sfs(\alpha)}$
for all $\alpha \in \sfQ^a$.
Representations of $\sfQ$ form an abelian category.

Let $V=(V_i)_{i \in \sfQ^v}$ be a non-zero collection of 
finite-dimensional $\C$-vector spaces.
We define 
\[
\Rep_\sfQ(V) = \bigoplus_{\alpha \in \sfQ^a} \Hom(V_{\sfs(\alpha)},V_{\sft(\alpha)}),
\]
which is the space of representations $(V,\rho)$ with fixed $V$.
The group $G_V := \prod_{i \in \sfQ^v} \GL(V_i)$ acts on it as 
\[
g=(g_i)_{i \in \sfQ^v} \colon \rho \mapsto \rho', 
\quad \rho'_\alpha = g_{\sft(\alpha)} \circ \rho_\alpha \circ g_{\sfs(\alpha)}^{-1}.
\]
Let $\ov{\sfQ^a} = \{\, \ov{\alpha} \mid \alpha \in \sfQ^a \,\}$ be a copy of $\sfQ^a$
and define $\sfs, \sft \colon \ov{\sfQ^a} \to \sfQ^v$ by
$\sfs(\ov{\alpha})=\sft(\alpha)$, $\sft(\ov{\alpha})=\sfs(\alpha)$.
Set $\ov{\sfQ}=(\sfQ^v,\ov{\sfQ^a},\sfs,\sft)$,
which is the quiver obtained from $\sfQ$ by reversing the orientation of each arrow.
Then the representation space
\[
\Rep_{\ov{\sfQ}}(V) = \bigoplus_{\ov{\alpha} \in \ov{\sfQ^a}} 
\Hom(V_{\sfs(\ov{\alpha})},V_{\sft(\ov{\alpha})})
= \bigoplus_{\alpha \in \sfQ^a} \Hom(V_{\sft(\alpha)},V_{\sfs(\alpha)})
\]
is dual to the vector space $\Rep_\sfQ(V)$ via the trace pairing,
and the representation space of the {\em double}
\[
\wh{\sfQ} = (\sfQ^v,\wh{\sfQ}^a,\sfs,\sft), \quad 
\wh{\sfQ}^a :=  \sfQ^a \sqcup \ov{\sfQ^a}
\]
of $\sfQ$ may be identified with the cotangent bundle of $\Rep_\sfQ(V)$:
\[
\Rep_{\wh{\sfQ}}(V) = \Rep_\sfQ(V) \oplus \Rep_{\ov{\sfQ}}(V)
\simeq \Rep_\sfQ(V) \oplus \Rep_\sfQ(V)^* \simeq T^* \Rep_\sfQ(V).
\]
The group $G_V$ naturally acts on it and preserves 
the canonical symplectic form 
\[
\omega = \sum_{\alpha \in \sfQ^a} \tr d\Xi_\alpha \wedge d\Xi_{\ov{\alpha}},
\]
where 
$\Xi_\alpha \colon \Rep_{\wh{\sfQ}}(V) \to \Hom(V_{\sfs(\alpha)},V_{\sft(\alpha)})$,
$\Xi \mapsto \Xi_\alpha$ denotes the projection.
Extend the map $\sfQ^a \to \ov{\sfQ^a}$, $\alpha \mapsto \ov{\alpha}$ 
to an involution of $\wh{\sfQ}^a$ and 
define $\epsilon \colon \wh{\sfQ}^a \to \{ \pm 1 \}$ by 
$\epsilon (\sfQ^a)=1$ and $\epsilon(\ov{\sfQ^a})=-1$.
Then $\omega$ is also expressed as 
\[
\omega = \frac12 \sum_{\alpha \in \wh{\sfQ}^a} 
\epsilon(\alpha) \tr d\Xi_{\alpha} \wedge d\Xi_{\ov{\alpha}}.
\]
Let $\frg_V = \bigoplus_{i \in \sfQ^v} \gl(V_i)$ 
be the Lie algebra of $G_V$. 
A moment map on $\Rep_{\wh{\sfQ}}(V)$ for the $G_V$-action is given by
\[
\mu=(\mu_i)_{i \in \sfQ^v} \colon \Rep_{\wh{\sfQ}}(V) \to \frg_V,
\quad \mu_i(\Xi) := \sum_{\alpha \in \wh{\sfQ}^a; \sft(\alpha)=i}
\epsilon(\alpha)\Xi_\alpha \Xi_{\ov{\alpha}},
\]
where we identify $\frg_V^*$ with $\frg_V$ using the trace.

\begin{dfn}
For $\Xi \in \Rep_{\wh{\sfQ}}(V)$,
a collection $W=(W_i)_{i \in \sfQ^v}$ 
of subspaces $W_i \subset V_i$ 
is called a {\em $\Xi$-invariant} subspace
if $\Xi_\alpha(W_{\sfs(\alpha)}) \subset W_{\sft(\alpha)}$ 
for all $\alpha \in \wh{\sfQ}^a$. 

A point $\Xi \in \Rep_{\wh{\sfQ}}(V)$ is {\em stable} if 
the representation $(V,\Xi)$ of $\wh{\sfQ}$ is irreducible,
i.e., there exist no $\Xi$-invariant subspaces except the trivial ones $0, V$.
\end{dfn}

See \cite{King} for the fact that the above stability 
is one of Mumford's stability conditions~\cite{MFK}; 
therefore the stable points form a $G_V$-invariant Zariski open subset of $\Rep_{\wh{\sfQ}}(V)$ 
on which the quotient group $G_V/\C^\times$ acts freely and properly
(note that the subgroup $\C^\times \subset G_V$ acts trivially on $\Rep_{\wh{\sfQ}}(V)$).

\begin{dfn}
For $\zeta =(\zeta_i)_{i \in \sfQ^v} \in \C^{\sfQ^v}$, 
set $\zeta_V = (\zeta_i 1_{V_i})_{i \in \sfQ^v} \in \frg_V^{G_V}$ and define
\[
\frM^s_\sfQ(V,\zeta) = \{\, \Xi \in \mu^{-1}(\zeta_V) \mid \text{$\Xi$ is stable}\,\}/G_V,
\]
which we call the {\em quiver variety}.
\end{dfn}

By the definition $\frM^s_\sfQ(V,\zeta)$ is a complex symplectic manifold 
(if non-empty).
We also denote it by $\frM^s_\sfQ(\bv,\zeta)$, where 
$\bv = (v_i)_{i \in \sfQ^v} \in \Z_{\geq 0}^{\sfQ^v}$, $v_i := \dim V_i$  
is the {\em dimension vector} of $V$.

\begin{rem}
(i) If $\mu^{-1}(\zeta_V)$ is non-empty, then $\zeta$ must satisfy 
\[
\zeta \cdot \bv := \sum_{i \in \sfQ^v} \zeta_i v_i =0.
\]
Indeed, for $\Xi \in \mu^{-1}(\zeta_V)$, we have
\begin{align*}
\zeta \cdot \bv  
= \sum_{i \in \sfQ^v} \tr \mu_i(\Xi) 
= \sum_{\alpha \in \wh{\sfQ}^a} \epsilon(\alpha) \tr \Xi_\alpha \Xi_{\ov{\alpha}}
&= \sum_{\alpha \in \wh{\sfQ}^a} \epsilon(\ov{\alpha}) \tr \Xi_{\ov{\alpha}} \Xi_\alpha \\
&= -\sum_{\alpha \in \wh{\sfQ}^a} \epsilon(\alpha) \tr \Xi_\alpha \Xi_{\ov{\alpha}},
\end{align*}
and hence $\zeta \cdot \bv =0$.

(ii) If $\frM^s_\sfQ(\bv,\zeta)$ is non-empty, 
then the dimension formula for symplectic quotients shows
\[
\dim \frM_\sfQ^s(\bv,\zeta) = 2 \Delta(\bv), \quad 
\Delta(\bv) := \sum_{\alpha \in \sfQ^a} v_{\sfs(\alpha)}v_{\sft(\alpha)} 
- \sum_{i \in \sfQ^v} v_i^2 + 1.
\]
\end{rem}

\subsection{Triangular decomposition of $B_k$-coadjoint orbits}\label{subsec:tri}

Fix a non-zero irregular type $T \in \frt(\hcalK_t)/\frt(\hcalO_t)$
at some $t \in \bbP^1$
and let $k>1$ be the pole order of $dT$.
In this subsection we give a ``triangular decomposition'' of 
the $B_k$-coadjoint orbit $O_B$ through $dT$,
which enables us to describe $O_B$ as the representation space 
of the double of some quiver.

Take a local coordinate $z$ vanishing at $t$ and 
write 
\[
T(z)=\sum_{i=1}^{k-1}T_i z^{-i}, \quad 
dT=\sum_{i=1}^{k-1}(dT)_i z^{-i-1}dz,
\quad (dT)_i=-iT_i \in \frt.
\]
For $i=0,1, \dots , k-2$,
let $J_i$ be the set of simultaneous eigenspaces 
of ($T_{i+1}$, $T_{i+2}, \dots , T_{k-1}$) 
and $\C^n = \bigoplus_{p \in J_i} V^{(i)}_p$
the associated decomposition.
For each $i$ and $j \leq i$, 
we have a natural surjection $\pi_i \colon J_j \to J_i$
such that $V^{(j)}_p \subset V^{(i)}_{\pi_i(p)}$ for $p \in J_j$.

Fix a total ordering on $J_i$ for each $i$ such that
\[
i<k-2,\ p, q \in J_i, \ \pi_{i+1}(p) < \pi_{i+1}(q) \Longrightarrow p < q.
\]
Each decomposition $\C^n = \bigoplus_{p \in J_i} V^{(i)}_p$
induces a block decomposition of $\Mat_n(\C)$.
Let $\frp_i^+ (\text{resp.}\ \frp_i^-) \subset \Mat_n(\C)$ be 
the subset of block upper (resp.\ lower) triangular matrices;
explicitly, 
\[
\frp_i^+ = \bigoplus_{\sumfrac{p,q \in J_i}{p \geq q}} 
\Hom (V^{(i)}_p,V^{(i)}_q), 
\quad
\frp_i^- = \bigoplus_{\sumfrac{p,q \in J_i}{p \leq q}} 
\Hom (V^{(i)}_p,V^{(i)}_q).
\]
We have a decomposition $\frp_i^\pm = \frh_i \oplus \fru_i^\pm$,
where
\[
\frh_i = \bigoplus_{p \in J_i} 
\End (V^{(i)}_p) = \bigcap_{j=i+1}^{k-1} \Ker \ad_{T_j}, 
\quad
\fru_i^\pm = \bigoplus_{\sumfrac{p,q \in J_i}{p \gtrless q}} 
\Hom (V^{(i)}_p,V^{(i)}_q).
\]
Note that the property of our total orderings implies 
\begin{gather*}
\frp_{i+1}^\pm = \frp_i^\pm \oplus (\frh_{i+1} \cap \fru_i^\mp), 
\quad
\fru_i^\pm = \fru_{i+1}^\pm \oplus (\frh_{i+1} \cap \fru_i^\pm), \\
\fru_i^\pm \cap \Ker \ad_{T_{i+1}} \subset \fru_{i+1}^\pm.
\end{gather*}
For convenience we use the convention 
$\frp_{k-1}^\pm = \frh_{k-1} = \frg$, 
$\fru_{k-1}^\pm =0$.
The following lemma immediately follows from the arguments 
in the proof of \lmmref{lem:exponent}.

\begin{lmm}
An element $b(z)=\sum_{i=0}^{k-1} b_i z^i$ of $B_k$
stabilizes $dT$ if and only if $b_i \in \frh_i$ for all $i=1,2,\dots ,k-1$.
\end{lmm}

Set
\[
\calP_\pm = \set{b(z) = \sum_{i=0}^{k-1} b_i z^i \in B_k}%
{b_i \in \frp_i^\pm \ (i=1,2, \dots ,k-1)}.
\]
These are Lie subgroups of $B_k$ since $\frp_i \subset \frp_{i+1}$.
Let $\frP_\pm$ be their Lie algebras.
The pairing on $\frb_k \otimes \frb_k^*$ enables us 
to identify the duals of $\frP_\pm$ with
\[
\frP_\pm^* := \set{X(z) = \sum_{i=1}^{k-1} X_i z^{-i-1}dz \in \frb_k^*}%
{X_i \in \frp_i^\mp \ (i=1,2, \dots ,k-1)}.
\]
Similarly, set
\[
\calU_\pm = \set{b(z) = \sum_{i=0}^{k-1} b_i z^i \in B_k}%
{b_i \in \fru_i^\pm \ (i=1,2, \dots ,k-1)}.
\]
Note that these are not Lie subgroups of $B_k$.
Let $\frU_\pm = T_1 \calU_\pm$ be the tangent spaces at the identity,
whose duals are identified with
\[
\frU_\pm^* := \set{X(z) = \sum_{i=1}^{k-1} X_i z^{-i-1}dz \in \frb_k^*}%
{X_i \in \fru_i^\mp \ (i=1,2, \dots ,k-1)}.
\]

\begin{lmm}\label{lem:dec}
For any $b \in B_k$, there uniquely exist $b_- \in \calU_-$ and $b_+ \in \calP_+$
such that $b=b_- b_+$.
\end{lmm}

\begin{proof}
Write 
\[
b=\sum_{i=0}^{k-1}b_i z^i,
\quad
b_\pm=\sum_{i=0}^{k-1}b^\pm_i z^i,
\quad b_0=b^\pm_0=1_{\C^n}.
\]
In terms of the coefficients $b_i, b^\pm_i$, 
the relation $b=b_- b_+$ is expressed as
\[
b^-_i + b^+_i = b_i - \sum_{j=1}^{i-1} b^-_j b^+_{i-j}
\quad (i=1,2,\dots ,k-1).
\]
The above inductively defines $b^-_i \in \fru_i^-, b^+_i \in \frp_i^+$
since $\frg = \fru_i^- \oplus \frp_i^+$.
\end{proof}

For $B \in O_B$, take $b \in B_k$ such that $B=\Ad^*_b (dT)$
and decompose $b = b_- b_+$ as above.
Note that $b_-$ does not depend on the choice of $b$ as 
the stabilizer of $dT$ is contained in $\calP_+$.
Set $B' = (b_-^{-1} B)|_{\frb_k^*}$ 
(which is the $z^{-1}\frg[z^{-1}]dz$-part of the product $b_-^{-1}B$) and 
\[
Q = b_- - 1_{\C^n} \in \frU_-, \quad P = B'|_{\frU_-^*} \in \frU_-^*.
\]

\begin{thm}\label{thm:tri}
The map 
\[
O_B \to \frU_- \times \frU_-^*; \quad
B \mapsto (Q, P)
\]
is a symplectomorphism from $O_B$ onto the space 
$\frU_- \times \frU_-^*$ equipped with the symplectic form
$\res_{z=0} \tr dQ \wedge dP$.
\end{thm}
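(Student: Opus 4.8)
The plan is to show that $\Phi\colon O_B\to\frU_-\times\frU_-^*$, $B\mapsto(Q,P)$, is a bijective morphism of varieties and then that it pulls back the form $\res_{z=0}\tr dQ\wedge dP$ to the Kirillov--Kostant--Souriau form $\omega_B$ on $O_B$. For bijectivity I would first unwind the definition. Since the $B_k$-coadjoint action carries no connection term (as $B_k$ is an honest algebraic group whose adjoint action is by conjugation), one has $\Ad^*_b\eta=(b\eta b^{-1})\big|_{\frb_k^*}$, so writing $B=\Ad^*_b(dT)$ and $b=b_-b_+$ as in \lmmref{lem:dec} gives $B=\bigl(b_-\tilde B\,b_-^{-1}\bigr)\big|_{\frb_k^*}$ with $\tilde B:=(b_+\,dT\,b_+^{-1})\big|_{\frb_k^*}\in\calP_+\cdot dT$; moreover $b_-$ is unambiguous because $\Stab_{B_k}(dT)\subseteq\calP_+$ (the lemma preceding \lmmref{lem:dec}), and one checks likewise that $\tilde B$ depends only on $B$. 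Then $B'=(b_-^{-1}B)\big|_{\frb_k^*}=(\tilde B\,b_-^{-1})\big|_{\frb_k^*}$, so $\Phi$ is the composite of the bijection $O_B\xrightarrow{\sim}\calU_-\times(\calP_+\cdot dT)$, $B\mapsto(b_-,\tilde B)$, coming from \lmmref{lem:dec}, with the map $(b_-,\tilde B)\mapsto\bigl(b_--1_{\C^n},(\tilde B\,b_-^{-1})|_{\frU_-^*}\bigr)$. The first factor is clearly a bijection onto $\frU_-$; for the second it suffices, for each fixed $b_-\in\calU_-$, to invert $\tilde B\mapsto(\tilde B\,b_-^{-1})|_{\frU_-^*}$ on $\calP_+\cdot dT$. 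Both $\calP_+\cdot dT$ and $\frU_-^*$ are affine spaces of dimension $\sum_{i=1}^{k-1}\dim\fru_i^+$, and I would recover $\tilde B$ coefficient by coefficient in $z$ by the triangular induction already used in the proof of \lmmref{lem:exponent}, using the relations $\frp_{i+1}^\pm=\frp_i^\pm\oplus(\frh_{i+1}\cap\fru_i^\mp)$, $\fru_i^\pm=\fru_{i+1}^\pm\oplus(\frh_{i+1}\cap\fru_i^\pm)$ listed before \lmmref{lem:dec}; the twist by $b_-^{-1}$ only alters lower-order terms at each stage, so it does not obstruct the induction. As everything is given by polynomial formulae, $\Phi$ and $\Phi^{-1}$ are morphisms.

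For the symplectic statement I would compute the pullback directly. A tangent vector to $O_B$ at $B$ is $\ad^*_X(B)$ for $X\in\frb_k$, and $\omega_B(\ad^*_X B,\ad^*_Y B)=\res_{z=0}\tr\bigl(B\,[X,Y]\bigr)$. Differentiating the factorization along $s\mapsto\Ad^*_{\exp(sX)}(B)$ produces $\dot Q(X)\in\frU_-$ from $\dot b_-(0)$ and, after projecting, $\dot P(X)\in\frU_-^*$ from the derivative of $\tilde B(s)\,b_-(s)^{-1}$; the theorem then amounts to the identity
\[
\res_{z=0}\tr\!\bigl(\dot Q(X)\,\dot P(Y)-\dot Q(Y)\,\dot P(X)\bigr)=\res_{z=0}\tr\!\bigl(B\,[X,Y]\bigr)\qquad(X,Y\in\frb_k).
\]
To prove it I would decompose $X=X_-+X_+$, $Y=Y_-+Y_+$ along $\frb_k=\frU_-\oplus\frP_+$, note that the $\frP_+$-directions move $\tilde B$ within $\calP_+\cdot dT$ while the $\frU_-$-directions move $b_-$, and then repeatedly use the invariance of the residue trace pairing and the triangularity of $\frU_\pm$, $\frP_\pm$, $\frp_i^\pm$ to cancel the terms that pair to zero, reducing both sides to the same expression.

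The step I expect to be the main obstacle is exactly this last computation. Because $P$ is assembled from the non-subgroup factor $b_-$ together with the projection onto $\frU_-^*$, the variation $\dot P$ is not a plain coadjoint velocity, and to match the two sides of the displayed identity one must keep track, power of $z$ by power of $z$, of which blocks survive the projections $|_{\frb_k^*}$ and $|_{\frU_-^*}$. I expect that after discarding the terms that vanish for degree and triangularity reasons both sides collapse; a useful intermediate check is the special point $B=dT$, where $b_-=1_{\C^n}$, $Q=0$, and almost all terms drop out, which should isolate the mechanism of the cancellation before carrying it out in general.
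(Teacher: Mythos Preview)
Your plan is essentially the paper's: bijectivity via a coefficient-by-coefficient triangular induction, and the symplectic identity via decomposing tangent vectors and exploiting the triangular structure. The paper packages the inverse as two short lemmas --- first recovering $B'$ uniquely from $(Q,P)$ by the condition $B'|_{\frU_-^*}=P$, $B'(1+Q)-dT\in\frU_-^*$, then identifying $\calP_+\cdot dT$ with $dT+\frU_-^*$ --- but this is the same induction you describe.

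One point in your symplectic outline needs correction. You propose to split $X=X_-+X_+$ along the linear decomposition $\frb_k=\frU_-\oplus\frP_+$ and assert that $X_+\in\frP_+$ moves only $\tilde B$ while $X_-\in\frU_-$ moves only $b_-$. This is false: since $\calU_-$ is not a subgroup, $\exp(sX_-)\,b_-$ need not lie in $\calU_-$, and conversely $b_-^{-1}\exp(sX_+)\,b_-$ need not lie in $\calP_+$; so neither direction decouples the factorization. The decomposition that actually works is the \emph{twisted} one obtained by differentiating $b=b_-b_+$ directly: write $X=(\xi+b_-\xi')\,b_-^{-1}$ with $\xi=\dot b_-\in\frU_-$ and $\xi'=\dot b_+\,b_+^{-1}\in\frP_+$. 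Then $\dot Q=\xi$ on the nose, and the paper's computation proceeds by expanding $\res\tr\bigl(B\,[X_1,X_2]\bigr)$ into four terms, showing the $(\xi'_1,\xi'_2)$-term vanishes because $[dT,\frP_+]\subset\frU_-^*$, and matching the mixed terms against $\eta_i=\dot P_i$ via the differentiated relation $P=(b_-^{-1}B)|_{\frU_-^*}$. With the linear splitting you would not get $\dot Q=X_-$, and the bookkeeping collapses. Once you switch to the twisted splitting, your sketch becomes exactly the paper's argument.
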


To prove it we need two lemmas.

\begin{lmm}\label{lem:tri}
For any $(Q,P) \in \frU_- \times \frU_-^*$, 
there exists a unique $B' \in \frb_k^*$ such that 
$B'|_{\frU_-^*} = P$ and $B'(1_{\C^n}+Q) - dT \in \frU_-^*$.
\end{lmm}

\begin{proof}
Write 
\[
Q=\sum_{i=1}^{k-1} Q_i z^i,
\quad
P=\sum_{i=1}^{k-1} P_i z^{-i-1}dz,
\quad
B'=\sum_{i=1}^{k-1} B'_i z^{-i-1}dz.
\]
Conditions $B'|_{\frU_-^*} = P$ and $B'(1_{\C^n}+Q) - dT \in \frU_-^*$ are then expressed as
\[
B'_{k-i} |_{\fru_{k-i}^+}=P_{k-i}, \quad
B'_{k-i}|_{\frp_{k-i}^-} = (dT)_{k-i} - \sum_{j=1}^{i-1} B'_{k-j} Q_{i-j} |_{\frp_{k-i}^-}
\quad
(i=1,2,\dots ,k-1).
\]
Since $\frg = \fru_{k-i}^+ \oplus \frp_{k-i}^-$,
the above inductively defines $B'_{k-i}$.
\end{proof}

\begin{lmm}\label{lem:P}
The $\calP_+$-orbit through $dT$ coincides with the subset $dT + \frU_-^* \subset \frb_k^*$.
\end{lmm}

\begin{proof}
For $B=\sum_{i=1}^{k-1}B_i z^{-i-1}dz \in \frb_k^*$ 
and $b=\sum_{i=0}^{k-1}b_i z^i \in \calP_+$,
the relation $B=\Ad^*_b(dT)$ 
is equivalent to
\[
B_{k-i} - (dT)_{k-i} + \sum_{j=1}^{i-1} (B_{k-j}b_{i-j} - b_{i-j}(dT)_{k-j}) = 0
\quad (i=1,2,\dots ,k-1).
\]
In terms of $X_i := B_i - (dT)_i\ (i=1,2,\dots ,k-1)$,
the above is expressed as
\begin{equation}
X_{k-i} = \sum_{j=1}^{i-1} ([b_{i-j},(dT)_{k-j}]-X_{k-j}b_{i-j})
\quad (i=1,2,\dots ,k-1).
\label{eq:P}
\end{equation}
Now assume \eqref{eq:P}. Then $X_{k-1}=0 \in \fru_{k-1}^+$, and furthermore we have
\begin{equation}
\begin{aligned}
&[\frp_{i-j}^+,(dT)_{k-j}] \subset 
[\frp_{k-1-j}^+,(dT)_{k-j}] \subset
\fru_{k-j-1}^+ \subset \fru_{k-i}^+, \\
&\fru_{k-j}^+ \cdot \frp_{i-j}^+ \subset \fru_{k-j}^+ \cdot \frp_{k-j}^+ 
\subset \fru_{k-j}^+ \subset \fru_{k-i}^+
\label{eq:P2}
\end{aligned}
\end{equation}
for $j<i<k$. Therefore \eqref{eq:P} inductively shows 
$X_{k-i} \in \fru_{k-i}^+$ for all $i$.
Hence $B-dT \in \frU_-^*$.

Conversely, assume $B-dT \in \frU_-^*$. 
To find $b \in \calP_+$ satisfying \eqref{eq:P}, 
we first project \eqref{eq:P} to 
$\fru_{k-i}^+ \ominus \fru_{k-i+1}^+ = \frh_{k-i+1} \cap \fru_{k-i}^+$
for $i \geq 2$.
By \eqref{eq:P2}, we then obtain
\begin{align*}
X_{k-i} |_{\fru_{k-i}^+ \ominus \fru_{k-i+1}^+} &= 
\sum_{j=1}^{i-1} ([b_{i-j},(dT)_{k-j}]-X_{k-j}b_{i-j})|_{\fru_{k-i}^+ \ominus \fru_{k-i+1}^+} \\
&= [b_1,(dT)_{k-i+1}]|_{\fru_{k-i}^+ \ominus \fru_{k-i+1}^+}
\quad (i=2,\dots ,k-1).
\end{align*}
Note that $[b_1,(dT)_{k-i+1}]|_{\fru_{k-i}^+ \ominus \fru_{k-i+1}^+}$ depends only on
the $\fru_{k-i}^+ \ominus \fru_{k-i+1}^+$-component of $b_1$ 
since 
\[
\frp_1^+ \ominus \fru_{k-i}^+ \subset \Ker \ad_{T_{k-i+1}}, \quad  
[\fru_{k-i+1}^+, (dT)_{k-i+1}] \subset \fru_{k-i+1}^+.
\]
By $\fru_{k-i}^+ \cap \Ker \ad_{T_{k-i+1}} \subset \fru_{k-i+1}^+$, 
we have $\fru_{k-i}^+ \ominus \fru_{k-i+1}^+ \subset \range \ad_{T_{k-i+1}}$.
Hence there exists a unique $b_1 \in \fru_1$ such that 
$b_1 |_{\fru_{k-i}^+ \ominus \fru_{k-i+1}^+}$
satisfies the above equality for $i=2,\dots ,k-1$.

Next, we project \eqref{eq:P} to 
$\fru_{k-i+1}^+ \ominus \fru_{k-i+2}^+$
for $i \geq 3$.
By \eqref{eq:P2}, we then obtain
\begin{align*}
X_{k-i} |_{\fru_{k-i+1}^+ \ominus \fru_{k-i+2}^+} &= 
\sum_{j=1}^{i-1} ([b_{i-j},(dT)_{k-j}]-X_{k-j}b_{i-j})|_{\fru_{k-i+1}^+ \ominus \fru_{k-i+2}^+} \\
&= ([b_1,(dT)_{k-i+1}]-X_{k-i+1}b_1
+ [b_2,(dT)_{k-i+2}])|_{\fru_{k-i+1}^+ \ominus \fru_{k-i+2}^+}
\end{align*}
for $i=3,\dots ,k-1$.
Note that $[b_2,(dT)_{k-i+2}]|_{\fru_{k-i+1}^+ \ominus \fru_{k-i+2}^+}$ depends only on
the $\fru_{k-i+1}^+ \ominus \fru_{k-i+2}^+$-component of $b_2$ 
since 
\[
\frp_2^+ \ominus \fru_{k-i+1}^+ \subset \Ker \ad_{T_{k-i+2}}, \quad 
[\fru_{k-i+2}^+, (dT)_{k-i+2}] \subset \fru_{k-i+2}^+.
\]
We have $\fru_{k-i+1}^+ \ominus \fru_{k-i+2}^+ \subset \range \ad_{T_{k-i+2}}$.
Hence there exists a unique $b_2 \in \fru_2^+$ such that 
$b_2 |_{\fru_{k-i+1}^+ \ominus \fru_{k-i+2}^+}$
satisfies the above equality for $i=3,\dots ,k-1$.

Repeating this argument,
we see that \eqref{eq:P} uniquely determines $b_i \in \fru_i^*$, $i=1,2, \dots ,k-2$,
which shows the existence (and uniqueness) of $b \in \calU_+ \subset \calP_+$.
\end{proof}

\begin{proof}[Proof of \thmref{thm:tri}]
Lemmas~\ref{lem:tri} and \ref{lem:P} imply that the map $B \mapsto (Q,P)$ 
is bijective (note that $B$ is uniquely determined from $Q$ and $B'$). 
We show that it is also symplectic.
Let $B=\Ad^*_b(dT) \in O_B$.
Decompose $b=b_- b_+$ as in \lmmref{lem:dec} and let $B_+=\Ad^*_{b_+}(dT)$.
For $i=1,2$, let $v_i \in T_B O_B$ and 
$(\xi_i,\eta_i)$ the corresponding tangent vector at $(Q,P) \in \frU_- \times \frU_-^*$.
Take $X_i \in \frb_k$ such that $[X_i,B]=v_i$.
By differentiating $b=b_-b_+$, we find $\xi'_i \in \frP_+$ such that 
\[
X_i = (\xi_i + b_- \xi'_i)b_-^{-1}.
\]
Now we calculate the symplectic form evaluated at $v_1,v_2$:
\begin{equation}
\begin{aligned}
\omega_B(v_1,v_2) &= \res_{z=0} \tr (B[X_1,X_2]) \\
&= \res_{z=0} \tr (B[(\xi_1 + b_- \xi'_1)b_-^{-1},(\xi_2 + b_- \xi'_2)b_-^{-1}]) \\
&= \res_{z=0} \tr (B[\xi_1 b_-^{-1},\xi_2 b_-^{-1}]) + 
\res_{z=0} \tr (B[b_- \xi'_1 b_-^{-1},b_- \xi'_2 b_-^{-1}]) \\
&\quad + \res_{z=0} \tr (B[\xi_1 b_-^{-1}, b_- \xi'_2 b_-^{-1}]) + 
\res_{z=0} \tr (B[b_- \xi'_1 b_-^{-1},\xi_2 b_-^{-1}]).
\label{eq:form}
\end{aligned}
\end{equation}
\lmmref{lem:P} implies $B_+ -dT \in \frU_-^*$; 
thus the second term in the most right hand side of \eqref{eq:form} is simplified as follows:
\[
\res_{z=0} \tr (B[b_- \xi'_1 b_-^{-1},b_- \xi'_2 b_-^{-1}])
= \res_{z=0} \tr (B_+[\xi'_1,\xi'_2])
= \res_{z=0} \tr (dT[\xi'_1,\xi'_2]).
\]
Since the coefficient of $[dT,\xi_1']$ in $z^{-i-1}dz$ ($i \geq 1$)
is contained in
\begin{align*}
\sum_{j=1}^{k-i-1} [(dT)_{i+j},\frp_j^+] 
\subset \sum_{j=1}^{k-i-1} [(dT)_{i+j},\frp_{i+j-1}^+] 
&= \sum_{j=1}^{k-i-1} [(dT)_{i+j},\fru_{i+j-1}^+] \\
&\subset \sum_{j=1}^{k-i-1} \fru_{i+j-1}^+ \subset \fru_i^+,
\end{align*}
we find $[dT,\xi_1'] \in \frU_-^*$ and thus 
\[
\res_{z=0} \tr (dT[\xi'_1,\xi'_2])=\res_{z=0} \tr ([dT,\xi'_1]\xi'_2])=0.
\]
Next, look at the third term in the most right hand side of \eqref{eq:form}:
\begin{align*}
\res_{z=0} \tr (B[\xi_1 b_-^{-1}, b_- \xi'_2 b_-^{-1}])
&=\res_{z=0} \tr (B_+[b_-^{-1}\xi_1, \xi'_2]) \\
&=\res_{z=0} \tr ([\xi'_2,B_+]b_-^{-1}\xi_1).
\end{align*}
Differentiating the relation $P=b_-^{-1}B|_{\frU_-^*}$ yields
\[
[\xi'_i,B_+]b_-^{-1}|_{\frU_-^*} = \eta_i + b_-^{-1} B \xi_i b_-^{-1}|_{\frU_-^*}. 
\]
Hence the third term is expressed as
\begin{equation}
\res_{z=0} \tr ([\xi'_2,B_+]b_-^{-1}\xi_1)
= \res_{z=0} \tr (\eta_2 \xi_1) + \res_{z=0} \tr (B \xi_2 b_-^{-1} \xi_1 b_-^{-1}).
\end{equation}
Similarly, the fourth term is
\begin{equation}
\res_{z=0} \tr (B[b_- \xi'_1 b_-^{-1},\xi_2 b_-^{-1}])=
-\res_{z=0} \tr (\eta_1 \xi_2) - \res_{z=0} \tr (B \xi_1 b_-^{-1} \xi_2 b_-^{-1}).
\end{equation}
Thus we obtain
\begin{align*}
\omega_B(v_1,v_2) &= \res_{z=0} \tr (B[\xi_1 b_-^{-1},\xi_2 b_-^{-1}]) + 
\res_{z=0} \tr (\eta_2 \xi_1) + \res_{z=0} \tr (B \xi_2 b_-^{-1} \xi_1 b_-^{-1}) \\
&\quad -\res_{z=0} \tr (\eta_1 \xi_2) - \res_{z=0} \tr (B \xi_1 b_-^{-1} \xi_2 b_-^{-1}) \\
&=\res_{z=0} \tr (\xi_1 \eta_2 - \xi_2 \eta_1),
\end{align*}
which shows $\omega = \res_{z=0} \tr dQ \wedge dP$.
\end{proof}

\subsection{Quiver description of $B_k$-coadjoint orbits}\label{subsec:quiver-B}

Using \thmref{thm:tri}, we will describe $O_B$ as the representation space 
of the double of the following quiver $\sfQ$.
Let $\sfQ^v$ be the set of simultaneous eigenspaces for 
coefficients $(T_1,T_2,\dots ,T_{k-1})$ of $T$
and $V =(V_p)_{p \in \sfQ^v}$ the collection of the corresponding simultaneous eigenspaces:
\[
\sfQ^v = J_0, \qquad V_p=V^{(0)}_p \quad (p \in \sfQ^v).
\] 
We have a natural surjection 
$\pi_i \colon \sfQ^v \to J_i$
for $i=0,1,\dots ,k-2$.
For each $p,q \in \sfQ^v$ with $p<q$,
draw $m_{p,q}$ arrows from $p$ to $q$, where 
\[
m_{p,q} := \max\{\, i \mid 
\pi_i(p)<\pi_i(q)\,\} \in \{ 0,1, \dots ,k-2 \}.
\]
Note that if we write 
\[
T(z)= \bigoplus_{p \in \sfQ^v} \left( t_p(z)\,1_{V_p} \right),
\quad t_p(z) \in z^{-1}\C[z^{-1}],
\]
then $m_{p,q} = \deg_{1/z}(t_p(z)-t_q(z))-1$ (each $t_p$ is called an {\em eigenvalue} of $T$).

Let $\sfQ$ be the resulting quiver 
and label the arrows from $p$ to $q$ as $\alpha_{q p;i}$, $i=1,2,\dots ,m_{p,q}$.
Then $G_V = H$ and  
\begin{equation}
\begin{aligned}
\frU_- = \bigoplus_{i=1}^{k-1} \fru^-_i z^i 
&= \bigoplus_{i=1}^{k-1} \bigoplus_{\sumfrac{p,q \in J_i}{p<q}} 
\Hom (V^{(i)}_p,V^{(i)}_q) z^i 
\\
&= \bigoplus_{i=1}^{k-1} 
\bigoplus_{\sumfrac{p,q \in J_0}{\pi_i(p)<\pi_i(q)}} 
\Hom (V_p,V_q) z^i
\simeq \Rep_{\sfQ}(V),
\end{aligned}\label{eq:quiver-U}
\end{equation}
where the last isomorphism sends the factor $\Hom (V_p,V_q) z^i$
to the set of linear maps associated to the arrow $\alpha_{q p;i}$.
Therefore \thmref{thm:tri} implies the following:
\begin{crl}\label{cor:quiver-B}
The map given in $\eqref{eq:quiver-U}$ induces 
a $G_V$-equivariant symplectic isomorphism between 
the $B_k$-coadjoint orbit $O_B$ and $\Rep_{\wh{\sfQ}}(V)$.
\end{crl}

\begin{example}
Here we show some examples of the quiver $\sfQ$ defined above.
For simplicity assume that $\frt$ is the standard maximal torus.

i) When $k\le 3$, many examples of $\sfQ$ are given in \cite{Boa08}.
In this case the degree in $1/z$ of the difference $t_p(z)-t_q(z)$ of 
each two distinct eigenvalues of $T$ is at most $2$.
Hence the underlying graph of $\sfQ$ is always simply-laced.

ii) When $k=4$, vertices of $\sfQ$ may be joined by double arrows.
In the simplest case where $n=2$ and 
$T(z)=\sum_{i=1}^3 T_i z^{-i}$, 
$T_3=\begin{pmatrix}
	a&0\\
	0&b
\end{pmatrix}$, $a \neq b$,
the quiver $\sfQ$ is given as follows.
\[
\begin{xy}
*\cir<5pt>{}="A",
(20,0)*\cir<5pt>{}="B",
\ar@{->}@<0.7mm> "A";"B",
\ar@{->}@<-0.7mm> "A";"B",
\end{xy}
\]

iii) Finally we show an example for the case $k=5$,
where vertices of $\sfQ$ may be joined by triple arrows.
Consider an irregular type $T=\sum_{i=1}^{4}T_{i}z^{-i}$ with $n=4$ of the form
\begin{align*}
	T_{4}&=\mathrm{diag\,}(a^{(4)}_{1},a^{(4)}_{2},a^{(4)}_{2},
	a_{2}^{(4)}),&
	T_{3}&=\mathrm{diag\,}(\ast,a^{(3)}_{1},a^{(3)}_{2},
	a^{(3)}_{2}),\\
	T_{2}&=\mathrm{diag\,}(\ast,\ast,a^{(2)}_{1},a_{2}^{(2)}),&
	T_{1}&=\mathrm{diag\,}(\ast,\ast,\ast,\ast),
\end{align*}
where $a^{(i)}_{1}\neq a^{(i)}_{2}$.
The associated quiver $\sfQ$ is then given as follows.
\[
\begin{xy}
(-17,0)*\cir<9pt>{}="A",
(17,0)*\cir<9pt>{}="B",
(0,29.5)*\cir<9pt>{}="C",
(0,9.7)*\cir<9pt>{}="D",
\ar@{->}@<1mm> "A";"B",
\ar@{->}@<-1mm> "A";"B",
\ar@{->} "A";"B",
\ar@{->}@<1mm> "A";"C",
\ar@{->}@<-1mm> "A";"C",
\ar@{->} "A";"C",
\ar@{->}@<1mm> "A";"D",
\ar@{->}@<-1mm> "A";"D",
\ar@{->} "A";"D",
\ar@{->}@<0.7mm> "D";"B",
\ar@{->}@<-0.7mm> "D";"B",
\ar@{->}@<0.7mm> "D";"C",
\ar@{->}@<-0.7mm> "D";"C",
\ar@{->}"B";"C"
\end{xy}
\]
\end{example}

The rest of this subsection is devoted to prove the following lemma,
which will be used in the next subsection.

\begin{lmm}\label{lem:stability}
Let $B=\sum_{i=1}^{k-1}B_i z^{-i-1}dz \in O_B$ and $\Xi \in \Rep_{\wh{\sfQ}}(V)$ 
the corresponding element under the above isomorphism.

{\rm (i)} For any $\Xi$-invariant subspace $W=(W_p)_{p \in J_0}$ of $V$,
the direct sum $S := \bigoplus_p W_p \subset \C^n$ is invariant under all $B_i$.

{\rm (ii)} Any subspace $S \subset \C^n$ invariant under all $B_i$
is homogeneous with respect to the decomposition $\C^n = \bigoplus_{p \in J_0} V_p$
and the collection $W=(W_p)_{p \in J_0}$, $W_p := S \cap V_p$ 
is a $\Xi$-invariant subspace of $V$.
\end{lmm}

The proof of it needs two sublemmas.

\begin{lmm}
Let $B = \sum_{j=1}^{k-1}B_j z^{-j-1}dz \in O_B$
and assume that for some $i=1,2 \dots , k$, 
\[
B_{k-j} \in \frh_{k-j} \quad (j \leq i).
\]
Then $B_{k-j}=(dT)_{k-j}$ for $j \leq i$.
\end{lmm}

\begin{proof}
Take $b=\sum_{j=0}^{k-1} b_j z^j \in B_k$ so that $B=\Ad^*_b(dT)$.
Then
\[
B_{k-j} - (dT)_{k-j} = \sum_{l=1}^{j-1} b_l (dT)_{k-j+l} - \sum_{l=1}^{j-1} B_{k-j+l} b_l.
\]
First, the equality for $j=2$ reads $B_{k-2} - (dT)_{k-2} = [b_1, (dT)_{k-1}]$.
Since the left hand side is contained in $\frh_{k-2}$ while the right hand side is in 
$\range \ad_{(dT)_{k-1}} = \frg \ominus \frh_{k-2}$, 
we see that both sides must be zero. Hence $B_{k-2}=(dT)_{k-2}$ and $b_1 \in \frh_{k-2}$.
If $i=2$ we are done; so assume $i>2$ and look at the equality for $j=3$:
\[
B_{k-3} - (dT)_{k-3} = [b_1,(dT)_{k-2}]+[b_2,(dT)_{k-1}].
\]
Since $B_{k-3} - (dT)_{k-3} \in \frh_{k-3}$, $[b_1,(dT)_{k-2}] \in \frh_{k-2} \ominus \frh_{k-3}$
and $[b_2,(dT)_{k-1}] \in \frg \ominus \frh_{k-2}$, 
we see that all the terms must be zero 
and hence $B_{k-3} = (dT)_{k-3}$, $b_1 \in \frh_{k-3}$, $b_2 \in \frh_{k-2}$.
Repeating this argument, we finally obtain $B_{k-i}=(dT)_{k-i}$ 
(and $b_l \in \frh_{k-i+l-1}$ for $l<i$).
\end{proof}

\begin{lmm}
Let $B=\sum_{j=1}^{k-1}B_j z^{-j-1}dz \in O_B$.
Then there exists a unique tuple $(b^{(1)}, b^{(2)}, \dots , b^{(k-2)})$ 
of elements of $B_k$ satisfying the following conditions:
\begin{enumerate}
\item Each $b^{(i)}$ has the form
\[
b^{(i)}(z) = \exp (z^{k-1}X^{(i)}_{k-1}) \exp (z^{k-2}X^{(i)}_{k-2}) 
\cdots \exp (z X^{(i)}_1),
\quad X^{(i)}_j \in \frh_{k-i} \ominus \frh_{k-i-1}.
\]
\item Define $B^{(i)} = \sum_{j=1}^{k-1}B^{(i)}_j z^{-j-1}dz \in O_B$, $i=0,1, \dots , k-2$ 
inductively by
\[
B^{(0)} = B, \quad B^{(i)} = \Ad^*_{b^{(i)}}(B^{(i-1)}).
\]
Then $B^{(i)}_j \in \frh_{k-i-1}$ for all $i,j$ and
$B^{(i)}_{k-j} = (dT)_{k-j}$ for $j \leq i+1$.
\end{enumerate}
\end{lmm}

\begin{proof}
By \cite[Proposition~9.3.2]{BV} (see Proposition~\ref{prop:BV1}), we see that 
there exists a unique $b^{(1)} \in B_k$ of the form
\[
b^{(1)}(z) = \exp (z^{k-1}X^{(1)}_{k-1}) \exp (z^{k-2}X^{(1)}_{k-2}) 
\cdots \exp (z X^{(1)}_1),
\quad X^{(1)}_j \in \frg \ominus \frh_{k-2}
\]
such that $B^{(1)}_{k-1} = B_{k-1} = (dT)_{k-1}$ and $B^{(1)}_j \in \frh_{k-2}$ for all $j$.
Since $B^{(1)} \in O_B$, 
the previous lemma shows $B^{(1)}_{k-2} = (dT)_{k-2}$.

Now assume $i>1$ and the unique existence of $b^{(l)},\ l=1,2, \dots , i-1$ of the required form 
satisfying that $B^{(l)}_j \in \frh_{k-i-1}$ for all $l < i$ and $j$, and that
$B^{(l)}_{k-j} = (dT)_{k-j}$ for $j \leq l+1 \leq i$.
Then set
\[
B' = B^{(i-1)} - \sum_{j=1}^{i-1} B^{(i-1)}_{k-j} z^{j-k-1}dz 
= B^{(i-1)} - \sum_{j=1}^{i-1} (dT)_{k-j} z^{j-k-1}dz \in \frb_k^*,
\]
which is expressed as $B'=\sum_{j=1}^{k-i} B'_j z^{-j-1}dz$ with 
$B'_{k-i} = (dT)_{k-i}$ and $B'_j \in \frh_{k-i}$ for all $j$.
Therefore \cite[Proposition~9.3.2]{BV} with $G=H_{k-i}:=\exp (\frh_{k-i})$ 
shows that there exists a unique $b^{(i)} \in B_k$ of the required form such that
\[
\Ad^*_{b^{(i)}}(B') = \sum_{j=1}^{k-i} B''_j z^{-j-1}dz, \quad
B''_{k-i} = (dT)_{k-i}, \quad B''_j \in \frh_{k-i-1}.
\]
Since $B^{(i)} = \Ad^*_{b^{(i)}}(B') + \sum_{j=1}^{i-1} (dT)_{k-j} z^{j-k-1}dz$ 
and hence $B^{(i)}_j \in \frh_{k-i-1}$ for all $j$, the previous lemma shows 
$B^{(i)}_{k-i-1} = (dT)_{k-i-1}$.
The proof is completed.
\end{proof}

\begin{proof}[Proof of Lemma~\ref{lem:stability}]
We first show that any subspace $S \subset \C^n$ invariant under all $B_l$
is homogeneous with respect to the decomposition 
$\C^n = \bigoplus_{p \in J_0}V_p$.
Let $b^{(i)}, X^{(i)}_l, B^{(i)}=\sum_{l=1}^{k-1}B^{(i)}_l z^{-l-1}dz$ be as in the previous lemma.
We claim that $S$ is invariant under $B^{(i)}_l$ for all $i,l$,
which implies the desired homogeneity of $S$,
and prove it by the induction on $i$.
The case $i=0$ is clear; so assume $i>0$ and 
that $S$ is invariant under $B^{(l)}_j$ for all $l<i$ and $j$.
Define $B^{(i,j)}=\sum_{l=1}^{k-1} B^{(i,j)}_l z^{-l-1}dz \in O_B$, $j=0,1, \dots , k-1$ 
inductively by
\[
B^{(i,0)}=B^{(i-1)}, \quad B^{(i,j)}= \Ad^*_{\exp( z^j X^{(i)}_j)} (B^{(i,j-1)}).
\] 
Note that each $X^{(i)}_j$ commutes with $B^{(i-1)}_{k-l} = (dT)_{k-l}$, $l<i$
as it is contained in $\frh_{k-i}$. Hence
\[
B^{(i,j)}_{k-l} = B^{(i,j-1)}_{k-l}\quad (l<i+j), \quad
B^{(i,j)}_{k-j-i} = B^{(i,j-1)}_{k-j-i} + [X^{(i)}_j, B^{(i,j-1)}_{k-i}],
\]
which in particular implies 
\[
B^{(i)}_{k-l}=
\begin{cases}
B^{(i-1)}_{k-l} & (l \leq i), \\
B^{(i,l-i)}_{k-l} & (l>i).
\end{cases}
\]
We show that $S$ is invariant under $B^{(i,j)}_l$ for all $l$ by induction on $j$.
The case $j=0$ is clear.
Assume $j>0$ and that $S$ is invariant under $B^{(i,j')}_l$ for all $j'<j$ and $l$.
Then $S$ is invariant under $B^{(i,j)}_{k-l}$ for $l<i+j$.
Also, because 
$[X^{(i)}_j, B^{(i,j-1)}_{k-i}] = [X^{(i)}_j, (dT)_{k-i}] \in \frh_{k-i} \ominus \frh_{k-i-1}$,
the above expression for $B^{(i,j)}_{k-j-i} = B^{(i)}_{k-j-i} \in \frh_{k-i-1}$ implies 
\[
B^{(i,j)}_{k-j-i} = B^{(i,j-1)}_{k-j-i}|_{\frh_{k-i-1}}, 
\quad
[X^{(i)}_j, (dT)_{k-i}]  
= - B^{(i,j-1)}_{k-j-i}|_{\frh_{k-i} \ominus \frh_{k-i-1}}.
\]
Since $S$ is invariant under $(dT)_{k-l}=B^{(i-1)}_{k-l}$ for $l \leq i$ 
and $\ad_{(dT)_{k-i}}$ acts as a nonzero scalar on each direct summand
$\Hom(V^{(k-i-1)}_p, V^{(k-i-1)}_q)$ of $\frh_{k-i} \ominus \frh_{k-i-1}$,
we see from the above equalities that $S$ is invariant 
under both $B^{(i,j)}_{k-j-i}$ and $X^{(i)}_j$, and hence under $B^{(i,j)}_l$ for all $l$.

Now we have a one-to-one correspondence between 
the collections $W=(W_p)_{p \in J_0}$ of subspaces $W_p \subset V_p$
and the subspaces $S \subset \C^n$ homogeneous with respect to the 
decomposition $\C^n = \bigoplus_{p \in J_0} V_p$:
\[
W \mapsto S := \bigoplus_{p \in J_0} W_p,
\qquad
S \mapsto W =(W_p)_{p \in J_0},\ W_p := S \cap V_p. 
\]
We next show that under this correspondence,
$W$ is $\Xi$-invariant if and only if 
$S$ is invariant under all $B_i$.
For given $W$, take a subspace $W'_p \subset V_p$ complimentary to $W_p$ for each $p \in J_0$ 
and set $S' = \bigoplus_p W'_p$. 
Define a one-parameter subgroup $\tau \colon \C^\times \to H$ by
\[
\tau(u) = \begin{bmatrix} u 1_S & 0 \\ 0 & 1_{S'} \end{bmatrix} 
\colon \C^n = S \oplus S' \to S \oplus S'.
\] 
If we express each $\Xi_\alpha\ (\alpha \in \wh{\sfQ}^a)$ as 
\[
\Xi_\alpha = 
\begin{bmatrix} 
\Xi_\alpha^{11} & \Xi_\alpha^{12} \\ 
\Xi_\alpha^{21} & \Xi_\alpha^{22} 
\end{bmatrix} 
\colon V_{\sfs(\alpha)} = W_{\sfs(\alpha)} \oplus W'_{\sfs(\alpha)} \to 
W_{\sft(\alpha)} \oplus W'_{\sft(\alpha)} =V_{\sft(\alpha)},
\]
then the action by $\tau(u)$ transforms it to  
\[
\begin{bmatrix} 
\Xi_\alpha^{11} & u\Xi_\alpha^{12} \\ 
u^{-1}\Xi_\alpha^{21} & \Xi_\alpha^{22}
\end{bmatrix}.
\]
Hence the limit $\lim_{u \to 0} \tau(u) \cdot \Xi$
exists if and only if $\Xi_\alpha^{21}=0$ for all $\alpha$, i.e.,
$W$ is $\Xi$-invariant.
Similarly, if we express each $B_i$ as 
\[
B_i = 
\begin{bmatrix} 
B_i^{11} & B_i^{12} \\ 
B_i^{21} & B_i^{22} 
\end{bmatrix} 
\colon \C^n = S \oplus S' \to S \oplus S'= \C^n,
\]
then the limit $\lim_{u \to 0} \tau(u) B \tau(u)^{-1}$
exists in $O_B$ (note that $O_B \subset \frb_k^*$ is closed 
as $B_k$ is a unipotent algebraic group; see \cite[Theorem 2]{Ros})
if and only if $B_i^{21}=0$ for all $i$, i.e.,
$S$ is invariant under all $B_i$.

Since the map $O_B \simeq \Rep_{\wh{\sfQ}}(V)$, $B \mapsto \Xi$ 
is an $H$-equivariant homeomorphism,
the existence of $\lim_{u \to 0} \tau(u) \cdot \Xi$ 
is equivalent to that of $\lim_{u \to 0} \tau(u) B \tau(u)^{-1}$;
we are done.
\end{proof}

\subsection{Moduli spaces and quivers}\label{subsec:quiver-moduli}

As in \crlref{cor:moduli1}, 
let $\bfT =(T_t)_{t \in D}$ be a collection of irregular types and set 
\[
H_t = \{\, h \in G \mid h T_t h^{-1}=T_t\,\}, \quad \frh_t = \Lie H_t 
\quad (t \in D), 
\quad \bfH = \prod_{t \in D} H_t.
\]
Assume that $D_\sirr := \{\, t \in D \mid T_t \neq 0\,\}$
is non-empty as the empty case is not interesting.
For each $t \in D_\sirr$, 
let $\sfQ_t, V_t$ be the quiver and collection of vector spaces associated to $T_t$
(so $G_{V_t}=H_t$).
Take a base point $\infty \in D_\sirr$ and set $D_0 = D \setminus \{ \infty \}$.

\begin{crl}
The extended moduli space $\tcalM^*(\bfT)$
is $\bfH$-equivariantly symplectomorphic to the product
\[
\prod_{t \in D_0} T^* G 
\times \prod_{t \in D_\sirr} \Rep_{\wh{\sfQ}_t}(V_t).
\] 
Here $H_\infty$ acts on each copy of $T^*G$ as the standard action
coming from the right multiplication
and on $\Rep_{\wh{\sfQ}_\infty}(V_\infty)$ in the obvious way,
while for $t \in D_0$, 
the group $H_t$ acts on the $t$-th copy of $T^* G$ 
as the action coming from the left multiplication 
and on $\Rep_{\wh{\sfQ}_t}(V_t)$ (if $t \in D_\sirr$)
in the obvious way.
\end{crl}

\begin{proof}
Recall that if $M$ is a complex symplectic manifold 
on which $G$ acts in a Hamiltonian fashion with  
a moment map $\mu \colon M \to \frg$,
then the symplectic quotient of the product $T^*G \times M$
by the diagonal action of $G$ 
(where the $G$-action on $T^*G$ comes from the right multiplication)
is canonically symplectomorphic to $M$:
\[
(T^* G \times M) \GIT G \simeq M; \quad [a,R,x] \mapsto a \cdot x.
\] 
Under this map, the $G$-action on $M$ corresponds to 
the one on $(T^* G \times M) \GIT G$ 
defined by $f \cdot [a,R,x] = [fa,R,x]$.

Let $k$ be the pole order of $dT_\infty$ and $O_B$ the $B_k$-coadjoint orbit through $dT_\infty$.
For $t \in D$, let $\tO_t$ be the extended orbit associated to $T_t$.
Propositions \ref{prop:decouple} and \ref{prop:ext-moduli} together with the above fact imply
\[
\tcalM^*(\bfT) \simeq 
\left( \prod_{t \in D} \tO_t \right) 
\GIT G \simeq O_B \times \prod_{t \in D_0} \tO_t. 
\]
\crlref{cor:quiver-B} now shows the assertion.
\end{proof}

In the rest of this section we assume $D_\sirr = \{ \infty \}$. 
In this case, the space $\tcalM^*(\bfT)$ is 
$\bfH$-equivariantly symplectomorphic to the product
\[
\prod_{t \in D_0} T^* G \times \Rep_{\wh{\sfQ}_\infty}(V_\infty).
\] 
Hence the symplectic quotient $\calM^*(\bfT,\bfL)$ of $\tcalM^*(\bfT)$ 
(see \crlref{cor:moduli2})
by the $\bfH$-action along the coadjoint orbit through 
each $-\bfL = (-L_t)_{t \in D} \in \bigoplus_{t \in D} \frh_t^*$ 
is isomorphic to 
\[
\left( \prod_{t \in D_0} O(L_t) \times \Rep_{\wh{\sfQ}_\infty}(V_\infty) \right)
\GIT_{O(-L_\infty)} H_\infty,
\]
where $O(\pm L_t)$ is the $H_t$-coadjoint orbit through $\pm L_t$ for $t \in D$ 
(note that $H_t = G$ for $t \in D_0$).
By the shifting trick, we thus obtain
\[
\calM^*(\bfT,\bfL) \simeq 
\left( O(L_\infty) 
\times \prod_{t \in D_0} O(L_t) 
\times \Rep_{\wh{\sfQ}_\infty}(V_\infty) \right)
\GIT H_\infty.
\]
Denoting by $V_p$, $p \in \sfQ_\infty^v$ 
the simultaneous eigenspaces for the coefficients of $T_\infty(z)$,
we can express $O(L_\infty)$ as the product $\prod_p O_p(L_\infty)$,
where $O_p(L_\infty)$ is the $\GL(V_p)$-coadjoint orbit through $L_\infty |_{V_p}$.
Now recall that coadjoint orbits of general linear groups admit 
a sort of quiver description; see \secref{subsec:quiver-A}.
This fact leads to the definition of the following quiver $\sfQ$.

For each $t \in D_0$,
fix a marking  
$(\lambda_{t,1},\lambda_{t,2}, \dots , \lambda_{t,d_t})$
of $O(L_t)$, i.e., 
a tuple satisfying 
\[
\prod_{i=1}^{d_t} (\pi_\sres (L_t) - \lambda_{t,i} 1_{\C^n}) = 0
\]
(see \dfnref{dfn:marking}).
Also for each $p \in \sfQ_\infty^v$, fix a marking 
$(\lambda_{p,1},\lambda_{p,2}, \dots , \lambda_{p,d_p})$
of $O_p(L_\infty) \subset \gl(V_p)^*$.
Set
\[
\sfQ^v = \sfQ_\infty^v \sqcup \bigsqcup_{t \in D_0} \{\, [t,l] \mid l=1,2, \dots ,d_t-1\,\}
\sqcup \bigsqcup_{p \in \sfQ_\infty^v} \{\, [p,l] \mid l=1,2, \dots , d_p-1 \,\},
\]
and draw one arrow from
\begin{itemize}
\item each $[t,l]\, (l \geq 2)$ to $[t,l-1]$, 
\item each $[p,l]\, (l \geq 2)$ to $[p,l-1]$,
\item each $[t,1]$ to each $p \in \sfQ_\infty^v$, 
\item each $[p,1]$ to $p$.
\end{itemize}
Denote the set of these arrows by $\sfQ_0^a$, and define
\[
\sfQ^a = \sfQ_\infty^a \sqcup \sfQ_0^a.
\]

Define a collection $\zeta=(\zeta_i)_{i \in \sfQ^v}$ of complex numbers by
\begin{gather*}
\zeta_p = - \lambda_{p,1} - \sum_{t \in D_0} \lambda_{t,1} \quad (p \in \sfQ_\infty^v), \\
\zeta_{[t,l]} = \lambda_{t,l} - \lambda_{t,l+1}, \qquad
\zeta_{[p,l]} = \lambda_{p,l} - \lambda_{p,l+1},
\end{gather*}
and a collection $V=(V_i)_{i \in \sfQ^v}$ of vector spaces
as follows.
For $p \in \sfQ_\infty^v$, let $V_p$ be the one used above.
For $t \in D_0$ and $l=1,2,\dots ,d_t-1$, set
\[
V_{[t,l]} = \range \left( \prod_{i=1}^l 
\left( \pi_\sres (L_t) -\lambda_{t,i} 1_{\C^n} \right) \right),
\]
and also, for $p \in \sfQ_\infty^v$ and $l=1,2,\dots ,d_p-1$, set
\[
V_{[p,l]} = \range \left( \prod_{i=1}^l 
\left( \pi_\sres (L_\infty)|_{V_p} -\lambda_{p,i} 1_{V_p} \right) \right).
\]

Applying \lmmref{lem:quiver-A} to $O(L_t)$, $O_p(L_\infty)$, 
we then obtain an open embedding
\[
\varphi \colon \calM^*(\bfT,\bfL) \hookrightarrow \Rep_{\wh{\sfQ}}(V) \GIT_{\zeta_V} G_V.
\]
We will show that $\varphi$ maps the subset $\calM^*_s(\bfT,\bfL)$ of $\calM^*(\bfT,\bfL)$
consisting of all stable points in the following sense 
onto the quiver variety $\frM_\sfQ^s(V,\zeta)$.

\begin{dfn}\label{dfn:stable}
A meromorphic connection $A \in \frg \otimes_\C \Omega^1(*D)$ on 
the trivial vector bundle $\calO^{\oplus n}$ is {\em stable}
if it has no non-zero proper subspace $S \subset \C^n$
such that $A(S \otimes_\C \calO) \subset S \otimes_\C \Omega^1(*D)$.
\end{dfn}

\begin{thm}\label{thm:main}
There exists a symplectomorphism 
$\calM^*_s(\bfT,\bfL) \simeq \frM_\sfQ^s(V,\zeta)$.
\end{thm}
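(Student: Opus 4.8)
The plan is to deduce \thmref{thm:main} from the open embedding $\varphi\colon\calM^*(\bfT,\bfL)\hookrightarrow\Rep_{\wh{\sfQ}}(V)\GIT_{\zeta_V}G_V$ constructed just above, by proving that $\varphi$ restricts to a bijection between the stable loci $\calM^*_s(\bfT,\bfL)$ and $\frM^s_\sfQ(V,\zeta)\subset\Rep_{\wh{\sfQ}}(V)\GIT_{\zeta_V}G_V$. Since $\varphi$ is already an open embedding and, by its very construction (\thmref{thm:tri}, \crlref{cor:quiver-B}, \lmmref{lem:quiver-A}, \prpref{prop:decouple}, \prpref{prop:exponent}, together with the fact that all the intermediate symplectic quotients are taken in the symplectic category), intertwines the symplectic structures on its domain and image, it suffices to establish the set-theoretic identity $\varphi(\calM^*_s(\bfT,\bfL))=\frM^s_\sfQ(V,\zeta)$; then $\varphi|_{\calM^*_s(\bfT,\bfL)}$ is the asserted symplectomorphism onto the smooth variety $\frM^s_\sfQ(V,\zeta)$.

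The core of the argument is a dictionary between invariant subspaces. Fix a connection $A\in\frg\otimes_\C\Omega^1(*D)$ representing a point of $\calM^*(\bfT,\bfL)$ and let $\Xi\in\Rep_{\wh{\sfQ}}(V)$ be a point of $\mu^{-1}(\zeta_V)$ lying over $\varphi$ of it. Under the chain of identifications, the polar part of $A$ at $\infty$ corresponds to a point of $O_B\times O(L_\infty)$ — its non-block coefficients $B_i$ being encoded by $\Xi|_{\wh{\sfQ}_\infty}$ via \crlref{cor:quiver-B}, and its leading residue $R_\infty=\bigoplus_{p\in\sfQ_\infty^v}R_{\infty,p}\in\prod_p O_p(L_\infty)$ by the chain data of $\Xi$ along the vertices $[p,l]$ via \lmmref{lem:quiver-A} — while each residue $R_t\in O(L_t)$, $t\in D_0$, is encoded by the chain data along the vertices $[t,l]$. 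A subspace $S\subset\C^n$ satisfies $A(S\otimes\calO)\subset S\otimes\Omega^1(*D)$ precisely when $S$ is invariant under all the $B_i$, under $R_\infty$, and under every $R_t$. By \lmmref{lem:stability}, invariance under all $B_i$ forces $S$ to be homogeneous, $S=\bigoplus_p W_p$ with $W_p\subset V_p$, and $(W_p)_{p\in\sfQ_\infty^v}$ is then a $\Xi|_{\wh{\sfQ}_\infty}$-invariant subspace, while conversely every such collection arises this way; invariance of such a homogeneous $S$ under $R_t$ (resp.\ $R_\infty$) is, by the explicit type-$A$ chain construction underlying \lmmref{lem:quiver-A}, equivalent to the existence of subspaces $W_{[t,l]}\subset V_{[t,l]}$ (resp.\ $W_{[p,l]}\subset V_{[p,l]}$), recovered as $S\cap V_{[t,l]}$ (resp.\ $W_p\cap V_{[p,l]}$), making $(W_p,W_{[t,l]},W_{[p,l]})$ a $\Xi$-invariant subspace of $V$ for the full quiver $\wh{\sfQ}$. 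Assembling these equivalences, $A$ is stable in the sense of \dfnref{dfn:stable} if and only if $\Xi$ has no non-trivial proper invariant subspace, i.e.\ $\Xi$ is stable; in particular $\varphi(\calM^*_s(\bfT,\bfL))\subset\frM^s_\sfQ(V,\zeta)$.

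It then remains to prove surjectivity: every stable $\Xi\in\mu^{-1}(\zeta_V)$ lies over the image of $\varphi$, and the connection it produces is stable. Since $\varphi$ is only an \emph{open} embedding, one must check that the non-degeneracy conditions built into \lmmref{lem:quiver-A} — the surjectivity of the composite maps going ``down'' each $[t,\bullet]$- and $[p,\bullet]$-chain and the injectivity of the reverse maps going ``up'', which are what allow one to reconstruct the coadjoint orbits $O(L_t)$ and $O_p(L_\infty)$ — hold automatically when $\Xi$ is irreducible. I would prove this by contradiction: a failure of such a rank condition at some chain vertex yields, after propagating it through the moment-map relations $\mu(\Xi)=\zeta_V$ (images flow down the chains and into the hub vertices $p\in\sfQ_\infty^v$, kernels flow up), a non-trivial proper $\Xi$-invariant subspace supported on a terminal segment of that chain, contradicting stability. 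Given that $\Xi$ lies over the image of $\varphi$, the dictionary of the previous paragraph shows the recovered connection is stable, so $[\Xi]\in\varphi(\calM^*_s(\bfT,\bfL))$.

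The main obstacle is precisely this last step — verifying that irreducibility of $\Xi$ forces the open conditions of \lmmref{lem:quiver-A} — because it requires careful combinatorial bookkeeping of how kernels and cokernels of the chain maps interact with the moment-map equations at the hub vertices $p$, where the three kinds of data ($\wh{\sfQ}_\infty$-arrows, the $[t,\bullet]$-chains and the $[p,\bullet]$-chains) meet, and one must ensure the candidate invariant subspace so produced is genuinely non-zero, proper, and closed under the doubled arrows in both directions. A secondary, routine point is to record that the dimension vector $\bv=\dim V$ assembled from the chosen markings makes $\dim\frM^s_\sfQ(V,\zeta)=2\Delta(\bv)$ agree with $\dim\calM^*_s(\bfT,\bfL)$, which also follows a posteriori from the open-embedding property once surjectivity is known.
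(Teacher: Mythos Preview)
Your strategy coincides with the paper's: both directions of the stability equivalence are run through the invariant-subspace dictionary furnished by \lmmref{lem:stability} (for the $\sfQ_\infty$-part) and \lmmref{lem:quiver-A} (for the type-$A$ legs), exactly as in the paper's proof. One minor remark: the paper recovers the leg subspaces as images $W_{[t,l]}=\Xi_{l,l-1}\cdots\Xi_{1,0}(S)$ rather than as intersections $S\cap V_{[t,l]}$; your formula only makes literal sense after fixing an identification of $V_{[t,l]}$ with a subspace of $\C^n$, and under the open conditions both constructions agree, so this is cosmetic.

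Where your proposal goes beyond the paper is the surjectivity step---showing that every stable $\Xi\in\mu^{-1}(\zeta_V)$ already satisfies the injectivity/surjectivity conditions describing $\range\varphi$. The paper's written proof works only with $[\Xi]$ in the image of $\varphi$ and therefore establishes $\varphi(\calM^*_s)=\range\varphi\cap\frM^s_\sfQ$, tacitly taking for granted the inclusion $\frM^s_\sfQ\subset\range\varphi$ that is needed for the theorem as stated (and for \crlref{cor:DS}). Your proposed argument---propagating a nonzero kernel of a leg map outward along the chain via the moment-map relations to produce a nonzero proper $\Xi$-invariant subspace supported on the tail of that leg, and the dual argument with images for a non-surjective $\Xi_{\ov\alpha}$---is correct and supplies this missing step; the combinatorics at the hub vertices $p\in\sfQ_\infty^v$ cause no trouble because the candidate invariant subspace is zero (respectively full) on all of $\sfQ_\infty^v$. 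So your plan is the paper's plan together with this legitimate supplement.
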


\begin{proof}
For $t \in D_0$ and $p \in \sfQ_\infty^v$, 
let $\alpha_{p,t}$ be the arrow from $[t,1]$ to $p$.
Then the image of $\varphi$ is exactly the set of $G_V$-orbits $[\Xi]$
in $\mu^{-1}(\zeta_V)$ such that 
\begin{itemize}
\item $\Ker \Xi_\alpha =0$ and $\range \Xi_{\ov{\alpha}} = V_{\sfs(\alpha)}$ 
for any $\alpha \in \sfQ_0^a \setminus \{\, \alpha_{p,t} \mid p \in \sfQ_\infty^v, t \in D_0 \,\}$, 
\item $\bigcap_{p \in \sfQ_\infty^v} \Ker \Xi_{\alpha_{p,t}} =0$ and 
$\sum_{p \in \sfQ_\infty^v} \range \Xi_{\ov{\alpha_{p,t}}} = \C^n$ for all $t \in D_0$.
\end{itemize}
We will check that the map $\varphi$ gives a symplectomorphism between the stable parts.
Let $A$ be a meromorphic connection representing a point in $\calM^*(\bfT,\bfL)$
and $[\Xi]=\varphi([A])$.
Take a standard coordinate $z$ on the affine line $\bbP^1 \setminus \{ \infty \}$.
Then $A$ is expressed as
\[
A = \left( \sum_{i=0}^{k-2} A_i z^i + \sum_{t \in D_0} \frac{R_t}{z-z(t)} \right) dz,
\quad A_i, R_t \in \frg.
\]
Set $B = \sum_{i=0}^{k-2} A_i z^i dz \in \frb_k^*$.
By the definition of $\varphi$, we may assume that $B$ is contained in 
the $B_k$-coadjoint orbit through $dT_\infty$ 
and each $R_t$ is expressed as 
\[
R_t = \left( \Xi_{\alpha_{p,t}} \Xi_{\ov{\alpha_{q,t}}} \right)_{p,q} + \lambda_{t,1}1_{\C^n}.
\]

Now suppose that $A$ is stable and let $W=(W_i)_{i \in \sfQ^v}$ be 
a $\Xi$-invariant subspace.
Then the above expression of $R_t$ and \lmmref{lem:stability} show that 
the direct sum $S := \bigoplus_{p \in \sfQ_\infty^v} W_p$ is invariant 
under all $A_i$ and $R_t$. 
Therefore the stability of $A$ implies that $S=0$ or $S=\C^n$.
If $S=0$, the injectivity conditions for $\Xi_\alpha$, $\alpha \in \sfQ_0^a$
immediately show that $W_i =0$ for all $i \in \sfQ^v$, and 
if $S=\C^n$, the surjectivity conditions for $\Xi_{\ov{\alpha}}$, $\alpha \in \sfQ_0^a$
show that $W_i = V_i$ for all $i \in \sfQ^v$. 
Hence $\Xi$ is stable.

Conversely, suppose that $\Xi$ is stable and 
let $S \subset \C^n$ be a subspace invariant under all $A_i$ and $R_t$.
Note that it is then also invariant under $\res_\infty(A)$.
\lmmref{lem:stability} shows 
$S = \sum_{p \in \sfQ_\infty^v} (S \cap V_p)$ 
and that the collection $(W_p)_{p \in \sfQ_\infty^v}$, $W_p := S \cap V_p$ 
is invariant under $(\Xi_\alpha)_{\alpha \in \sfQ_\infty^a}$.
Applying \lmmref{lem:quiver-A} to all $R_t$ and the block components 
$\res_\infty(A)|_{\gl(V_p)}$,
we thus obtain a $\Xi$-invariant subspace $W=(W_i)_{i \in \sfQ^v}$ 
containing $(W_p)_{p \in \sfQ_\infty^v}$ as a subcollection.
The stability of $\Xi$ then implies $W=0$ or $W=V$, and hence $S=0$ or $S=\C^n$.
\end{proof}

The above theorem enables us to apply 
Crawley-Boevey's criterion~\cite{CB01} 
for the non-emptiness of quiver varieties 
to the additive irregular Deligne-Simpson problem
and obtain the following result, 
generalizing \cite{Boa12,CB03DS}
(see \cite{Kac83} for the basic terminology on the root systems attached to quivers):

\begin{crl}\label{cor:DS}
The space $\calM^*_s(\bfT,\bfL) \simeq \frM_\sfQ^s(\bv,\zeta)$ 
is non-empty if and only if the following conditions hold:
\begin{enumerate}
\item $\bv \in \Z_{\geq 0}^{\sfQ^v}$ is a positive root,
\item $\zeta \cdot \bv =0$,
\item any non-trivial decomposition 
$\bv = \bw_1 + \bw_2 + \cdots + \bw_l$
of $\bv$ by positive roots $\bw_j$ with $\zeta \cdot \bw_j =0$ 
satisfies the inequality 
\[
\Delta(\bv) > \sum_{j=1}^l \Delta(\bw_j).
\] 
\end{enumerate}
\end{crl}

\section{Appendix}\label{sec:app}

\subsection{Formal reduction theory}\label{subsec:reduction}

In this subsection,
we recall some basic facts 
due to Babbitt-Varadarajan~\cite{BV}
on the formal reduction theory 
of meromorphic connections. 
Let $G$ be a complex reductive group and $\frg$ its Lie algebra.

\begin{prp}[{\cite[Proposition 9.3.2]{BV}}]\label{prop:BV1}
Let $A =\sum_{i \geq 0}A_i z^{i-k}\,dz \in \frg\fl{z}dz$ 
with $k>1$, $A_0 \neq 0$. 
Suppose that $A_0$ is semisimple.
Then there exists a unique
$\wh{g} \in G(\C\fp{z})$
of the form
\[
\wh{g}(z) = \prod_{i=1}^\infty \exp (z^i X_i) 
= \lim_{j \to \infty} \left( \exp (z^j X_j) \exp (z^{j-1} X_{j-1}) 
\cdots \exp (z X_1) \right)
\]
with
$X_i \in \range \ad_{A_0}\, (i \in \Z_{>0})$,
such that 
\[
A'=\sum_{i \geq 0}A'_i z^{i-k} dz :=\wh{g}[A]
\]
satisfies 
\[
A'_0 = A_0, \qquad [A_0,A']=0.
\]
If there exists $p \in \Z_{>0}$ such that 
$[A_0, A_i]=0$ for $i \leq p$, then $A'_i = A_i$ for $i \leq p$.
\end{prp}

\begin{prp}\label{prop:BV2}
Let $A =\sum_{i \geq 0}A_i z^{i-k}\,dz \in \frg\fl{z}dz$ 
with $k>1$, $A_0 \neq 0$. 
Suppose that $A_i,\,i \leq k-2$ are contained in some torus $\frt \subset \frg$. Set 
\[
H_i = \bigcap_{j=0}^{k-i-2} \{\, h \in G \mid \Ad_h A_j = A_j\,\} 
\quad (i=0,1, \dots , k-2),
\quad H_{k-1} = G,
\]
and let $\frh_i$ be the Lie algebra of $H_i$ for $i \leq k-1$.
Then there exists a unique $(k-1)$-tuple  
$(\wh{g}^{(1)}, \wh{g}^{(2)}, \dots , \wh{g}^{(k-1)})$ with
\[
\wh{g}^{(l)}(z) = \prod_{i=1}^\infty \exp (z^i X^{(l)}_i) \in H_{k-l}(\C\fp{z}), 
\quad 
X^{(l)}_i \in \range \left( \ad_{A_{l-1}}|_{\frh_{k-l}} \right)
\]
such that
\[
A^{(l)}=\sum_{i \geq 0}A^{(l)}_i z^{i-k} dz
:=\wh{g}^{(l)} \cdots \wh{g}^{(1)}[A]
\]
satisfies
\[
A^{(l)}_i = A_i \quad (i=0,1, \dots ,k-2),
\qquad A^{(l)} \in \frh_{k-l-1}\fl{z}dz
\]
for each $l \geq 1$.
\end{prp}

\begin{proof}
\prpref{prop:BV1} shows that there uniquely exists 
\[
\wh{g}^{(1)}(z)=\prod_{i>0} \exp (z^i X^{(1)}_i) \in G(\C\fp{z}),
\quad X^{(1)}_i \in \range \ad_{A_0}
\]
such that $A^{(1)}=\sum_{i\geq 0} A^{(1)}_i z^{i-k}dz := \wh{g}^{(1)}[A]$ satisfies 
\[
A^{(1)}_i = A_i \quad (i \leq k-2), \qquad
A^{(1)} \in \frh_{k-2}\fl{z}dz.
\] 
If $k=2$, we are done. 
Otherwise, we apply \prpref{prop:BV1} to 
$A^{(1)}-A_0 z^{-k}dz \in \frh_{k-2}\fl{z}dz$
with $G$ replaced by $H_{k-2}$,
and uniquely find 
\[
\wh{g}^{(2)}(z)=\prod_{i>0} \exp (z^i X^{(2)}_i) \in H_{k-2}(\C\fp{z}),
\quad X^{(2)}_i \in \range \left( \ad_{A_1} |_{\frh_{k-2}} \right)
\]
such that 
\[
A^{(2)}=\sum_{i\geq 0} A^{(2)}_i z^{i-k}dz := \wh{g}^{(2)}[A^{(1)}]
=\wh{g}^{(2)}[A^{(1)}-A_0 z^{-k}dz] + A_0 z^{-k}dz
\]
satisfies 
\[
A^{(2)}_i = A_i \quad (i \leq k-2), \qquad
A^{(2)} \in \frh_{k-3}\fl{z}dz.
\]
Repeating this argument yields the assertion.
\end{proof}

\begin{crl}\label{cor:reduction}
Under the assumption and notation of \prpref{prop:BV2},
there exists $\wh{g} \in G(\C\fp{z})$ with $\wh{g}(0)=1$ such that 
\[
A'=\sum_{i \geq 0}A'_i z^{i-k} dz :=\wh{g}[A]
\]
satisfies 
\[
A'_i = A_i \quad (i=0,1, \dots ,k-2),
\qquad A' \in \frh_0\fl{z}dz.
\]
\end{crl}

\subsection{Coadjoint orbits of general linear groups and quivers of type $A$}\label{subsec:quiver-A}

In this subsection, we assume $G=\GL(n,\C)$ 
and recall some relation 
between $G$-coadjoint orbits and quivers of type $A$.

Let $O \subset \frg^*$ be a $G$-coadjoint orbit.
We identify $\frg^*$ with $\frg$ via the trace pairing.

\begin{dfn}\label{dfn:marking}
A {\em marking} of $O$ is an ordered tuple 
$(\lambda_1,\lambda_2,\dots ,\lambda_d)$
of complex numbers such that
\[
\prod_{i=1}^d (A-\lambda_i 1_{\C^n}) = 0
\]
for some (and hence all) $A \in O$.
\end{dfn}

Fix a marking 
$(\lambda_1,\lambda_2,\dots ,\lambda_d)$ of $O$
and define a quiver $\sfQ$ with vertices
$\sfQ^v = \{\, 0,1, \dots ,d-1\,\}$
by drawing one arrow from each $l \in \sfQ^v\, (l \geq 1)$ to $l-1$.
Fix $L \in O$ and define a collection of vector spaces $V=(V_l)_{l \in \sfQ^v}$ by
\[
V_0 = \C^n, \qquad
V_l = \range \left( \prod_{i=1}^l 
\left( L -\lambda_i 1_{\C^n} \right) \right)
\quad (l \geq 1).
\]
For $\Xi \in \Rep_{\wh{\sfQ}}(V)$, 
we denote its components by $\Xi_{l,l-1} \in \Hom(V_{l-1},V_l)$, 
$\Xi_{l-1,l} \in \Hom(V_l,V_{l-1})$, $l=1,2, \dots ,d-1$.
Note that in this case the moment map $\mu=(\mu_l) \colon \Rep_{\wh{\sfQ}}(V) \to \frg_V$ 
is expressed as
\[
\mu_l(\Xi) =
\begin{cases}
\Xi_{0,1}\Xi_{1,0} & (l=0), \\
\Xi_{l,l+1}\Xi_{l+1,l}-\Xi_{l,l-1}\Xi_{l-1,l} & (1 \leq l < d-1), \\
-\Xi_{d-1,d-2}\Xi_{d-2,d-1} & (l=d-1).
\end{cases}
\]

The following lemma is essentially due to Crawley-Boevey~\cite{CB03,CB03DS}:

\begin{lmm}\label{lem:quiver-A}
Let $Z$ be the subvariety of $\Rep_{\wh{\sfQ}}(V)$ defined by 
\[
Z = \set{\Xi \in \Rep_{\wh{\sfQ}}(V)}%
{\begin{aligned}
&\mu_l(\Xi)=(\lambda_l-\lambda_{l+1})1_{V_l}\ (l \geq 1),\\
&\text{$\Xi_{l-1,l}$ is injective for $l \geq 1$}, \\
&\text{$\Xi_{l,l-1}$ is surjective for $l \geq 1$}
\end{aligned}}.
\]
Then it is smooth, the group $\prod_{l=1}^{d-1} \GL(V_l)$ acts freely there,
and the shift of the moment map $\mu_0$ by $\lambda_1 1_{\C^n}$ 
induces a $G$-equivariant symplectomorphism
\[
Z/\prod_{l=1}^{d-1}\GL(V_l) \to O; \quad [\Xi] \mapsto \Xi_{0,1}\Xi_{1,0} + \lambda_1 1_{\C^n}.
\]
Furthermore, for any $\Xi \in Z$ and any subspace $S$ of $\C^n$ invariant under  
$\Xi_{0,1}\Xi_{1,0} + \lambda_1 1_{\C^n}$, 
there exists a $\Xi$-invariant subspace $W=(W_l)_{l \in \sfQ^v}$ of $V$ 
such that $W=0$ (resp.\ $W=V$) if and only if $S=0$ (resp.\ $S=\C^n$).
\end{lmm}

\begin{proof}
It is straightforward to check that the action of $\prod_{l=1}^{d-1}\GL(V_l)$
on $Z$ is free and hence $Z$ is smooth (by a basic property of the moment map).
Also, it is known~\cite[Appendix]{CB03} that the map $\mu_0 + \lambda_1 1_{\C^n}$ induces 
an isomorphism from the categorical quotient of 
the affine algebraic variety
\[
\ov{Z} := \{\, \Xi \in \Rep_{\wh{\sfQ}}(V) 
\mid \mu_l(\Xi)=(\lambda_l - \lambda_{l+1})1_{\C^n}\ (l \geq 1)\,\}
\]
by the action of $\prod_{l=1}^{d-1}\GL(V_l)$
to the closure of $O$.
The arguments in the proof of \cite[Theorem~1]{CB03DS} then shows that  
it induces a $G$-equivariant isomorphism 
from $Z/\prod_{l=1}^{d-1}\GL(V_l)$ to $O$,
which is Poisson (and hence symplectic) because it comes from a moment map.
The proof of the rest assertion is also included in [loc.~cit.]:
Let $\Xi \in Z$ and $X=\Xi_{0,1}\Xi_{1,0} + \lambda_1 1_{\C^n} \in O$.
If $S \subset \C^n$ is $X$-invariant,
define 
\[
W_0 =S, \qquad W_l = \Xi_{l,l-1} \Xi_{l-1,l-2} \cdots \Xi_{1,0}(S)
\quad (l \geq 1).
\] 
Then $W=(W_l)_{l \in \sfQ^v}$ satisfies the desired condition.
\end{proof}



\providecommand{\bysame}{\leavevmode\hbox to3em{\hrulefill}\thinspace}


\end{document}